\newtheorem{theorem}{Theorem}[section]
\newtheorem{proposition}[theorem]{Proposition}
\newtheorem{problem}[theorem]{Problem}
\newtheorem{example}[theorem]{Example}
\newtheorem{corollary}[theorem]{Corollary}
\newtheorem{claim}[theorem]{Claim}
\theoremstyle{definition}
\newtheorem{definition}[theorem]{Definition}
\theoremstyle{remark}
\newtheorem{remark}[theorem]{Remark}
\newcommand{\rnp}{{\rm RNP}}
\newcommand{\RNP}{{\rm RNP}}
\newcommand{\conv}{{\rm conv}\hskip0.02cm}
\def\f2{\mathbb{F}_2}
\def\dist{\hskip0.02cm{\rm dist}\hskip0.01cm}
\newcommand{\ep}{\varepsilon}
\begin{document}

\title{\LARGE On metric characterizations of the Radon-Nikod\'ym and related properties of Banach
spaces}

\author{Mikhail Ostrovskii\footnote{Supported in part by NSF
DMS-1201269}}

\date{\today}
\maketitle

\noindent{\bf Abstract} We find a class of metric structures which
do not admit bilipschitz embeddings into Banach spaces with the
Radon-Nikod\'ym property. Our proof relies on Chatterji's (1968)
martingale characterization of the RNP and does not use the
Cheeger's (1999) metric differentiation theory. The class includes
the infinite diamond and both Laakso (2000) spaces. We also show
that for each of these structures there is a non-RNP Banach space
which does not admit its bilipschitz embedding.

We prove that a dual Banach space does not have the \RNP\ if and
only if it admits a bilipschitz embedding of the infinite diamond.

The paper also contains related characterizations of reflexivity
and the infinite tree property.

\medskip

\noindent{\bf Keywords:} Banach space, diamond graph, geodesic,
infinite tree property, Laakso space, martingale, Radon-Nikod\'ym
property, reflexivity
\medskip

\noindent{\bf 2010 Mathematics Subject Classification.} Primary:
46B22; Secondary: 05C12, 30L05, 46B10, 46B85, 54E35.
\medskip

\begin{large}

\tableofcontents

\section{Introduction and some general problems}\label{S:MetrChar}

In the recent work on metric embeddings a substantial role is
played by existence and non-existence of bilipschitz embeddings of
metric spaces into Banach spaces with the Radon-Nikod\'{y}m
property (RNP, for short), see \cite{CK06,CK09,LN06}. At the
seminar ``Nonlinear geometry of Banach spaces'' (Texas A \&\ M
University, August 2009) Johnson suggested the problem of metric
characterization of reflexivity and the Radon-Nikod\'{y}m property
 \cite[Problem 1.1]{Tex09}.
\medskip

The problem of metric characterization of the Radon-Nikod\'{y}m
property and reflexivity can be understood and approached in
several different ways. The purpose of the present paper is to
develop one of the possible approaches to it. Our approach is
similar to the approach of metric characterization of
superreflexivity suggested by Bourgain \cite{Bou86} in the first
paper on metric characterizations of classes of Banach spaces.
This approach (for different classes of spaces) was later followed
in \cite{Bau07,BMW86,JS09,MN08,Ost11,Ost13a,Ost13+,Pis86} (see
also accounts in \cite{Pis11} and \cite{Ost13b}). The mentioned
approach is based on the notion of test spaces.

\begin{definition}\label{D:TestSp} Let $\mathcal{P}$ be a class of Banach spaces
and let $T=\{T_\alpha\}_{\alpha\in A}$ be a set of metric spaces.
We say that $T$ is a set of {\it test-spaces} for $\mathcal{P}$ if
the following two conditions are equivalent:
\begin{enumerate}
\item $X\notin\mathcal{P}$.

\item\label{I:DefTesSp2} The spaces $\{T_\alpha\}_{\alpha\in A}$
admit bilipschitz embeddings into $X$ with uniformly boun\-ded
distortion.
\end{enumerate}
\end{definition}

\begin{remark} We write $X\notin\mathcal{P}$ rather than
$X\in\mathcal{P}$ for terminological reasons: we would like to use
terms ``test-spaces for reflexivity, superreflexivity, etc.''
rather than ``test-spaces for {\bf non}reflexivity, {\bf
non}superreflexivity, etc.''
\end{remark}

We will be mostly interested in the following special case of
Definition \ref{D:TestSp}:

\begin{definition} We say that a metric space $X$ is a \emph{test
space} for $\mathcal{P}$ if the bilipschitz embeddability of $X$
into a Banach space $Y$ is equivalent to $Y\notin\mathcal{P}$.
\end{definition}

The following problems are open.

\begin{problem} Does there exist a test space for the \RNP?
\end{problem}

\begin{problem}\label{P:MetTSRefl} Does there exist a test space for
reflexivity?
\end{problem}

\begin{remark}
It should be mentioned that, as we know from the well-known
example of Ribe \cite{Rib84} (see also \cite[Theorem
10.28]{BL00}), the \RNP\ and reflexivity are not preserved by
uniform homeomorphisms and therefore their metric
characterizations are not included into the Ribe program. See
\cite{Bal12} and \cite{Nao12} for description of the Ribe program.
\end{remark}

\begin{remark} The example of Ribe \cite{Rib84} mentioned above, combined with the
classical observation of \cite{CK63} (see also \cite[Proposition
1.11]{BL00} and \cite[Lemma 9.7]{Ost13b}) that uniformly
continuous maps between Banach spaces are Lipschitz for ``large''
distances, implies that RNP and reflexivity cannot be
characterized by uniformly discrete test spaces.
\end{remark}

The natural candidates for being test spaces for reflexivity and
the \RNP\ are the infinite diamond $D_\omega$ and the first Laakso
space $L_\omega$ (we recall the definition of $D_\omega$ below).
These spaces are the natural candidates because it was shown in
\cite{JS09} (see also \cite{Ost11} and \cite[Section
9.3.2]{Ost13b}) that their finite versions form collections of
test spaces for superreflexivity. However. it turns out that these
natural candidates for being test spaces for RNP are not such.
More precisely if a Banach space $X$ admits a bilipschitz
embedding of the infinite diamond or the first Laakso space, then
$X\notin\RNP$, but there are non-\RNP\ Banach spaces which do not
admit bilipschitz embedding of these spaces. In the case of the
infinite diamond these statements were proved in \cite{Ost11}. In
the case of the first Laakso space, only the first statement was
proved in \cite{Ost11}. One of the purposes of this paper is to
generalize these results. In Theorems \ref{T:MetrNonRNP} and
\ref{T:IsoMNonRNP} we find a wide class $\mathcal{R}$ of metric
spaces containing, in addition to $D_\omega$ and $L_\omega$, also
the second Laakso space $X_\omega$ (defined in Example
\ref{Ex:LaaksoAhlforsRegularPI}), and such that the bilipschitz
embeddability of a metric space $M\in\mathcal{R}$ into a Banach
space $X$ implies that $X\notin\RNP$. On the other hand, we prove
(Theorem \ref{T:NotTestSp}) that for each $M\in\mathcal{R}$ there
exists a non-\RNP\ space $X$ which does not admit bilipschitz
embeddings of $M$.\medskip

Here we would like to mention that Cheeger and Kleiner
\cite[Corollary 1.9]{CK09} used the theory of differentiability of
functions on metric spaces developed by Chee\-ger \cite{Che99}
(see also \cite{Kei04,KM11}) in order to show that the Laakso
spaces $L_\omega$ and $X_\omega$ (introduced in \cite{Laa00}, see
\cite[p.~290]{LP01} and \cite[Example 1.2 and Example 1.4]{CK13}
for their elegant description) do not admit bilipschitz embeddings
into a Banach space with the \RNP. Theorem \ref{T:MetrNonRNP}
provides a different proof of non-embeddability of the Laakso
spaces $L_\omega$ and $X_\omega$ into Banach spaces with the \RNP.
See Section \ref{S:SecondLaakso} for the proof for $X_\omega$, the
$L_\omega$ case was already considered in \cite{Ost11}. This proof
does not use the theory of differentiability of functions on
metric spaces developed by Cheeger \cite{Che99}.\medskip

Now we recall the definition of infinite diamond. The {\it diamond
graph} of level $0$ is denoted $D_0$. It has two vertices joined
by an edge of length $1$. $D_n$ is obtained from $D_{n-1}$ as
follows. Each edge of $D_{n-1}$ is of length $2^{-(n-1)}$. Given
an edge $uv\in E(D_{n-1})$, it is replaced by a quadrilateral $u,
a, v, b$ with edge lengths $2^{-n}$. We endow $D_n$ with their
shortest path metrics. We consider the vertex of $D_n$ as a subset
of the vertex set of $D_{n+1}$, it is easy to check that this
defines an isometric embedding. We introduce $D_\omega$ as the
union of the vertex sets of $\{D_n\}_{n=0}^\infty$. For $u,v\in
D_\omega$ we introduce $d_{D_\omega}(u,v)$ as $d_{D_n}(u,v)$ where
$n\in\mathbb{N}$ is any integer for which $u,v\in V(D_n)$. Since
the natural embeddings $D_n\to D_{n+1}$ are isometric,
$d_{D_n}(u,v)$ does not depend on the choice of $n$ for which
$u,v\in V(D_n)$. To the best of my knowledge the first paper in
which diamond graphs $\{D_n\}_{n=0}^\infty$ were used in Metric
Geometry is \cite{GNRS04} (conference version was published in
1999).

\begin{definition}\label{D:tree}  Let $\delta>0$.
A sequence $\{x_i\}_{i=1}^\infty$ in a Banach space $X$ is called
a \emph{$\delta$-tree} \index{tree @ $\delta$-tree} if
$x_i=\frac12(x_{2i}+x_{2i+1})$ and
$||x_{2i}-x_i||=||x_{2i+1}-x_i||\ge\delta$. We say that a Banach
space $X$ has the \emph{infinite tree property} if it contains a
bounded $\delta$-tree for some $\delta>0$.
\end{definition}

\begin{theorem}[\cite{Ost11}]\label{T:BulgDiamond} A bilipschitz embeddability of $D_\omega$
into a Banach space $Y$ implies the infinite tree property of $Y$.
\end{theorem}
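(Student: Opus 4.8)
The plan is to construct, inside $Y$, a bounded $\delta$-tree for some $\delta=\delta(C)>0$, where $C$ is the distortion of a given bilipschitz embedding $f\colon D_\omega\to Y$; by Definition \ref{D:tree} this is exactly the infinite tree property, and it may equivalently be phrased as producing a bounded $Y$-valued dyadic martingale whose successive differences have norm bounded away from $0$. Preliminary normalization: after rescaling $f$ and translating we may assume $d_{D_\omega}(x,y)\le\|f(x)-f(y)\|\le C\,d_{D_\omega}(x,y)$ for all $x,y$ and $f(s)=0$ for $s$ the source of $D_0$; since $\diam D_\omega=1$, the image $f(D_\omega)$ then lies in the ball of radius $C$, so boundedness of anything assembled from values of $f$ is automatic.

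The structural input is the recursive midpoint pattern of $D_\omega$. Whenever $uv$ is an edge of some $D_k$ (so $d_{D_\omega}(u,v)=2^{-k}$), the passage to $D_{k+1}$ creates two new vertices $m^{+}_{uv},m^{-}_{uv}$ with $d(u,m^{\pm}_{uv})=d(m^{\pm}_{uv},v)=2^{-(k+1)}$ and $d(m^{+}_{uv},m^{-}_{uv})=2^{-k}$; in particular $\{m^{+}_{uv},m^{-}_{uv}\}$ are metric midpoints of $\{u,v\}$, and each of $u,v$ is a metric midpoint of $\{m^{+}_{uv},m^{-}_{uv}\}$. Iterating this, one finds for every $n$ a full binary array of vertices of $D_\omega$ indexed by $\{0,1\}^{\le n}$ in which the vertex attached to a string is, simultaneously with all the others, a metric midpoint of the vertices attached to its two one-letter extensions. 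Applying $f$ to such an array produces an \emph{approximate} binary tree in $Y$: by the bilipschitz estimates the point attached to $\sigma$ is the midpoint of the two points below it up to an error at most $C$ times the scale of the corresponding sub-diamond, and the increments are of that same order. These approximate trees are the raw material; the task is to upgrade them to an exact, bounded $\delta$-tree.

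The heart of the proof — and the step I expect to be the main obstacle — is to reconcile three competing requirements: the midpoint identities $x_\sigma=\tfrac12(x_{\sigma 0}+x_{\sigma 1})$ must become \emph{exact} (a bilipschitz map makes them only approximate); the increments $\|x_{\sigma i}-x_\sigma\|$ must be bounded below by a \emph{fixed} $\delta$ (whereas $D_\omega$ offers branchings only of size $\sim 2^{-k}$ at level $k$, so a naive ``one diamond level per tree level'' scheme yields increments tending to $0$); and the whole tree must remain \emph{bounded} (whereas the renormalization by $\sim 2^{k}$ needed to restore a uniform $\delta$ tends to blow the points up). Resolving this requires exploiting the self-similarity of $D_\omega$ together with its finite diameter, which is what forces the branching directions at different scales to superpose in a ``$c_0$-like'' rather than an ``$\ell_1$-like'' manner; and it requires a limiting step — passage to a weak-$*$ cluster point in $Y^{**}$ of suitably renormalized finite pieces (extracting along a common net, recording the increment lower bounds by norming functionals so that they survive the limit), followed by a transfer back to $Y$, or else an equivalent hands-on recursion — to make the midpoint relations exact. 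Once a bounded $\delta$-tree has been produced, the conclusion that $Y$ has the infinite tree property is immediate from the definition.
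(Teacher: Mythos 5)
Your write-up stalls exactly where you predict it will: the three tensions you list (approximate midpoints, separation $\sim 2^{-k}$ at level $k$, blow-up under renormalization) are never resolved, and the device you gesture at --- weak-$*$ cluster points in $Y^{**}$ of renormalized finite pieces, then a ``transfer back to $Y$'' --- is not an argument (a $\delta$-tree in $Y^{**}$ does not in general descend to $Y$, and you give no mechanism for the transfer). The source of the trouble is that you build the tree out of the \emph{point images} $f(x_\sigma)$; in that framing the three problems are genuine and not fixable by a limiting procedure, because any rescaling that restores a uniform $\delta$ at one scale destroys boundedness and the midpoint relations at all other scales.

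The construction that works (and is the one in \cite{Ost11}) uses \emph{normalized edge increments} instead of point images. Normalize $f$ so that $\ell\,d(x,y)\le\|f(x)-f(y)\|\le d(x,y)$, and for an edge $uv$ of $D_n$ set $t_{uv}=2^{n}\bigl(f(v)-f(u)\bigr)$, with a consistent orientation. If $u,a,v,b$ is the quadrilateral replacing $uv$, the telescoping identity $f(v)-f(u)=(f(a)-f(u))+(f(v)-f(a))=(f(b)-f(u))+(f(v)-f(b))$ gives the \emph{exact} midpoint relations $t_{uv}=\tfrac12(t_{ua}+t_{av})=\tfrac12(t_{ub}+t_{bv})$ with no error term, and $\|t_e\|\le 1$ for every edge $e$ because the upper Lipschitz bound scales exactly as the normalization does; so boundedness and exactness are automatic, not obstacles. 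The lower bilipschitz bound is needed only once per quadrilateral, for the diagonal: since $(t_{ua}-t_{av})-(t_{ub}-t_{bv})=2^{n+2}\bigl(f(a)-f(b)\bigr)$ and $d(a,b)=2^{-n}$, this difference has norm at least $4\ell$, so at least one of $\|t_{ua}-t_{av}\|$, $\|t_{ub}-t_{bv}\|$ is at least $2\ell$. Declaring the children of $t_{uv}$ to be the pair $(t_{ua},t_{av})$ or $(t_{ub},t_{bv})$ according to which side satisfies this estimate produces a dyadic tree in which every node is the exact average of its two children, each child lies at distance at least $\ell$ from its parent, and all nodes lie in the unit ball: a bounded $\delta$-tree with $\delta=\ell$. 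This selection dichotomy --- the single essential use of the lower bound, applied only to pairs inside one quadrilateral, exactly as remarked in Section \ref{S:SubMetrJames} --- is the idea missing from your proposal.
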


The mentioned above results about $D_\omega$ can be obtained by
combining Theorem \ref{T:BulgDiamond} with known results on the
\RNP. Namely, it is known \cite[page~111]{BL00} that Banach spaces
with the infinite tree property do not have the \rnp. On the other
hand, Bourgain and Rosenthal \cite{BR80} (see also \cite[Example
5.30]{BL00}) constructed an example of a Banach space without the
\RNP\ which does not have the infinite tree property.\medskip

In view of Theorem \ref{T:BulgDiamond} the following result which
we prove in this paper could be considered as a strengthening of a
result of Stegall \cite{Ste75}, who proved that dual Banach spaces
without the RNP have the infinite tree property.

\begin{theorem}\label{T:ExtStegall} A dual Banach space does not have the RNP if and
only if it admits a bilipschitz embedding of $D_\omega$.
\end{theorem}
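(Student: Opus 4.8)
I would split the equivalence into its two implications. The implication ``$D_\omega$ embeds bilipschitzly into $X^*\Rightarrow X^*$ fails the \RNP{}'' is immediate from results already available: by Theorem~\ref{T:BulgDiamond} such an embedding forces the infinite tree property on $X^*$, and a Banach space with the infinite tree property does not have the \RNP{} \cite[page~111]{BL00}. This direction is essentially Stegall's theorem read backwards. The content of the statement is therefore the converse, and for it the dual-space hypothesis must be used essentially: for a general Banach space failing the \RNP{} (e.g.\ the Bourgain--Rosenthal space \cite{BR80}) the space $D_\omega$ does \emph{not} embed, again by Theorem~\ref{T:BulgDiamond}.

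\emph{Setup for the converse.} Assume $X^*$ fails the \RNP{}. The first step is to invoke the dual, $w^*$-slice form of the non-dentability characterization of the \RNP{}: there exist a bounded, convex, $w^*$-closed (hence $w^*$-compact) set $C\subseteq X^*$ and $\varepsilon>0$ such that every $w^*$-slice of $C$ has diameter at least $\varepsilon$. Replacing $C$ by $C-C$ and translating, I may assume $C=-C$, and after rescaling $\diam C\le 1$; in particular $\diam C\ge\varepsilon$, since $C$ is one of its own $w^*$-slices.

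\emph{The construction.} The plan is to build a bilipschitz embedding $f\colon D_\omega\to C$ recursively over the levels $D_0\subseteq D_1\subseteq\cdots$, maintaining: (i) $\alpha\,2^{-n}\le\|f(u)-f(v)\|\le\beta\,2^{-n}$ for every edge $uv$ of $D_n$, with $0<\alpha\le\beta$ absolute constants depending on $\varepsilon$; (ii) $f(V(D_n))\subseteq C$; and (iii) a transversality bookkeeping condition that records, for each current edge, a norm-one functional from the predual $X$ which is of order $2^{-n}$ on the edge-difference vector, together with compatible ``level functionals'' that are monotone along the $s$--$t$ geodesics of $D_n$. One starts from a pair $f(s)=-f(t)\in C$ with $\|f(s)-f(t)\|\ge\varepsilon$, available since $\diam C\ge\varepsilon$ and $C=-C$. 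To pass from $D_n$ to $D_{n+1}$, take an edge $uv$ of $D_n$ with new vertices $p,q$; its midpoint $m=\tfrac12(f(u)+f(v))$ lies in $C$ and is not an extreme point, because $f(u)\ne f(v)$, so $(C-m)\cap(m-C)$ contains nonzero vectors. Using the slice hypothesis, applied with suitably chosen functionals from $X$ that control the behaviour of $C$ in the direction of $f(v)-f(u)$, one extracts $g\in(C-m)\cap(m-C)$ with $\|g\|$ of order $2^{-(n+1)}$ and with a definite component transverse to $f(v)-f(u)$, and sets $f(p)=m+g$, $f(q)=m-g$. A direct estimate re-establishes (i) at level $n+1$, (ii) holds by construction, and (iii) is updated from the chosen slices and functionals.

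\emph{Bilipschitzness, and the main obstacle.} Given $u,v\in D_\omega$, pick $n$ with $u,v\in V(D_n)$ and a geodesic between them in $D_n$; the upper estimate $\|f(u)-f(v)\|\le L\,d_{D_\omega}(u,v)$ follows from (i) by summing edge lengths along the geodesic. The lower estimate is the crux: write $f(u)-f(v)$ as a telescoping sum of edge increments and of the perturbations inserted at the various scales, and use invariant (iii) --- testing against the relevant predual functionals --- to show that these contributions do not cancel but recover a fixed multiple of $d_{D_\omega}(u,v)$; equivalently, $f$ restricted to each $s$--$t$ geodesic is uniformly bilipschitz, and the transverse perturbation inserted at the first scale at which $u$ and $v$ lie in different subdiamonds is detected by the corresponding functional. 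I expect the main obstacle to be precisely the construction of the perturbations $g$: one must make the single hypothesis ``every $w^*$-slice of $C$ has diameter $\ge\varepsilon$'' simultaneously yield perturbations that are large enough (of order $2^{-(n+1)}$) to keep $\|f(p)-f(q)\|$ from collapsing and to track the geometric decay of edge lengths, well-aimed enough that $m\pm g$ stays inside the \emph{fixed} bounded set $C$ at every level, and transverse enough that --- accumulated over infinitely many scales --- they witness non-contraction rather than telescoping to zero. This is the point at which $w^*$-compactness of $C$, and hence the dual-space assumption, is used decisively, and where I expect essentially all of the technical work to lie.
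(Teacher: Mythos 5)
Your treatment of the easy direction is correct and coincides with the paper's: bilipschitz embeddability of $D_\omega$ forces the infinite tree property (Theorem \ref{T:BulgDiamond}), hence failure of the \RNP; and the Bourgain--Rosenthal example shows the dual hypothesis is essential for the converse. The converse, however, is where the entire content lies, and the route you propose has a genuine gap at exactly the step you flag as ``the main obstacle''; the difficulty is not merely technical but conceptual, and non-dentability is the wrong starting point. Two concrete problems. First, the recursive step requires, at the midpoint $m$ of each current edge, a \emph{symmetric} perturbation $g$ with $m\pm g\in C$ and $\|g\|$ bounded below by a fixed multiple of $2^{-(n+1)}$. Non-extremality of $m$ gives some nonzero $g$ with no quantitative lower bound, and ``every $w^*$-slice of $C$ has diameter $\ge\varepsilon$'' produces, when iterated, a bounded $\delta$-\emph{bush} (each point a convex combination of finitely many well-separated successors), not the dyadic two-branch midpoint structure your parallelogram construction needs. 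The gap between bushes and dyadic trees is precisely the gap between failing the \RNP\ and having the infinite tree property, and it is bridged in dual spaces only by Stegall's theorem. So your plan silently re-requires Stegall's theorem at the very point where it tries to work from a weaker hypothesis.

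Second, even granted a bounded $\delta$-tree, your invariant (iii) is not obtainable from it: to get the lower Lipschitz bound for an \emph{arbitrary} pair $w,z\in D_\omega$ one needs, for each tree node $y_j$, a norm-one functional from the predual that is uniformly close to $1$ on the \emph{entire tail} of one child and uniformly close to $0$ on the entire tail of the other (the conditions \eqref{E:tail2j} and \eqref{E:tail2j+1} in the paper); otherwise the perturbations accumulated over infinitely many scales can telescope to zero. A bounded $\delta$-tree by itself only yields a \emph{partially} bilipschitz embedding with the quadrilateral pairs active (this is Theorem \ref{T:SubInfTree}), and the paper explicitly records that it is open whether the infinite tree property implies embeddability of $D_\omega$. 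The paper's proof obtains the tail-separating functionals not from slices but from Stegall's construction for a separable subspace $Y\subseteq X$ with $Y^*$ nonseparable: a dyadic family $\{W_{n,i}\}$ of $w^*$-compact convex subsets of $B_{X^*}$ whose indicator functions are $\varepsilon$-approximated, in the sense of \eqref{E:Stegall}, by evaluations at vectors $x_{n,i}$ of the predual; the tree is then built so that the whole tail of each node stays inside the corresponding $W_{n,i}$, which is what makes the functionals $x_{k+1,2r}$ detect non-cancellation in the case analysis. To repair your argument you would need to replace the non-dentability hypothesis by this (or an equivalent) dyadic separated structure, i.e.\ essentially to invoke Stegall's construction rather than the slice characterization.
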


This strengthening is not immediate because at the moment it is
not known whether the infinite tree property of a Banach space $Y$
implies the bilipschitz embeddability of $D_\omega$ into $Y$. In
this connection we observe that if we widen the notion of a test
space to what we call a \emph{submetric test-space}, we easily get
a characterization of the infinite tree property. We mean
following definition.

\begin{definition} A \emph{submetric test-space} for a class $\mathcal{P}$ of Banach spaces is defined as a metric space $T$
with a marked subset $S\subset T\times T$ such that the following
conditions are equivalent for a Banach space $X$:

\begin{enumerate}

\item $X\notin\mathcal{P}$.

\item There exist a constant $0<C<\infty$ and an embedding $f:T
\to X$ satisfying the condition

\begin{equation}\label{E:PartBilipsch}
\forall (x,y)\in S\quad d_{T}(x,y)\le ||f(x)-f(y)||\le C
d_{T}(x,y).
\end{equation}
\end{enumerate}

An embedding satisfying \eqref{E:PartBilipsch} is called a
\emph{partially bilipschitz} embedding. Pairs $(x,y)$ belonging to
$S$ are called \emph{active}.
\end{definition}

\begin{theorem}\label{T:SubInfTree} The class of Banach spaces with the infinite tree
property admits a submetric characterization in terms of the
metric space $D_\omega$ with the set of active pairs defined as
follows: a pair is active if and only if it is a pair of vertices
of a quadrilateral introduced in one of the steps.
\end{theorem}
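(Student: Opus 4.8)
The plan is to prove both implications of the equivalence, where the non-trivial direction is reusing the martingale structure already present in $D_\omega$.

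One direction is essentially contained in Theorem \ref{T:BulgDiamond}: if $X$ admits a bilipschitz embedding of $D_\omega$, then a fortiori it admits a partially bilipschitz embedding with the active pairs being the quadrilateral-vertex pairs, and Theorem \ref{T:BulgDiamond} shows that even a genuine bilipschitz embedding forces the infinite tree property. I would, however, want to check that the proof of Theorem \ref{T:BulgDiamond} in \cite{Ost11} actually only uses the distance estimates on the active pairs (vertices of quadrilaterals); if so, this direction of the submetric characterization follows verbatim. The key point is that in a quadrilateral $u,a,v,b$ with edges of length $2^{-n}$, the midpoint relation $f(u)+f(v)=f(a)+f(b)$ holds approximately once we know $\|f(u)-f(a)\|,\|f(a)-f(v)\|,\|f(u)-f(b)\|,\|f(b)-f(v)\|$ are all comparable to $2^{-n}$ and $\|f(u)-f(v)\|$ is comparable to $2^{-(n-1)}$: the triangle inequality is essentially saturated, which pins down the geometry enough to extract a $\delta$-tree with $\delta$ depending only on $C$. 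Since all the distances used in that argument are between vertices of a common quadrilateral, they are all active, so the argument goes through for partially bilipschitz embeddings.

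For the converse, suppose $X$ has the infinite tree property, so it contains a bounded $\delta$-tree $\{x_i\}_{i=1}^\infty$ with $x_i=\tfrac12(x_{2i}+x_{2i+1})$ and $\|x_{2i}-x_i\|=\|x_{2i+1}-x_i\|\ge\delta$; by boundedness we also have $\|x_{2i}-x_i\|\le R$ for some $R<\infty$. The plan is to map $D_\omega$ into $X$ sending the two vertices of $D_0$ to $x_1$ and to a fixed point at distance $\ge\delta$ (reusing the tree), and then, recursively, sending the new vertices $a,b$ of the quadrilateral replacing an edge $uv$ that ``lives'' on a dyadic node $i$ of the tree to the appropriate affine combinations built from $x_{2i},x_{2i+1}$. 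Concretely, if $u,v$ have been placed so that their midpoint is $x_i$ and $\|f(u)-f(v)\|$ is controlled, one sets $f(a)=f(u)+ (f(v)-f(u))/2 + t(x_{2i}-x_i)$ and $f(b)=f(u)+(f(v)-f(u))/2 - t(x_{2i}-x_i)$ for a suitable scalar $t$, so that $f(a)+f(b)=f(u)+f(v)$ automatically. The convexity/midpoint identity of the tree guarantees the combinatorial branching matches the recursive structure of $D_\omega$; one must check that the distances on active pairs (edges of each new quadrilateral, i.e.\ $f(u)f(a)$, $f(a)f(v)$, etc.) stay within a fixed factor $C$ of $2^{-n}$. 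The lower bound $d_T(x,y)\le\|f(x)-f(y)\|$ on active pairs comes from the $\ge\delta$ part of the tree condition after rescaling; the upper bound comes from the $\le R$ bound.

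**The main obstacle** I expect is bookkeeping the scales: in $D_\omega$ each generation's edges are half as long as the previous, so one must rescale the tree increments $x_{2i}-x_i$ at level $n$ to have norm $\approx 2^{-n}$, while simultaneously ensuring the $\delta$-lower bound survives the rescaling (it does, uniformly, since $\delta\le\|x_{2i}-x_i\|\le R$ means the ratio is bounded), and ensuring that the placement of $a,b$ does not distort the already-placed distances among earlier vertices — but since $f(a),f(b)$ only perturb within the span relevant to the node $i$ and the perturbations at disjoint subtrees are ``independent'' in the sense of not affecting each other's active pairs, this should be controllable. A secondary subtlety is that a bounded $\delta$-tree need not be ``well-separated'' at all scales for non-active pairs, but that is exactly why we only ask for a \emph{submetric} (partially bilipschitz) embedding — the non-active pairs are unconstrained, so no contradiction arises. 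I would organize the write-up as: (1) recall the tree, normalize; (2) define $f$ on $V(D_n)$ by induction on $n$, maintaining the invariant ``midpoint of each current edge's endpoints is the corresponding tree node, and active-pair distances at level $k\le n$ lie in $[2^{-k}, C\,2^{-k}]$''; (3) verify the invariant is preserved; (4) pass to the union $D_\omega$ and conclude, combining with Theorem \ref{T:BulgDiamond} for the forward direction.
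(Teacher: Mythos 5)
Your proposal matches the paper's proof: the forward direction is obtained exactly by observing that the proof of Theorem \ref{T:BulgDiamond} in \cite{Ost11} uses the bilipschitz estimate only on pairs lying in a common quadrilateral, and the converse is the same parallelogram construction (your formula $f(a)=\tfrac12(f(u)+f(v))+t(x_{2i}-x_i)$ with $t=2^{-(k+1)}$ is algebraically identical to the paper's replacement of the edge vector $x_i/2^{k}$ by a parallelogram with sides $x_{2i}/2^{k+1}$ and $x_{2i+1}/2^{k+1}$, starting from $f$ mapping $D_0$ to $0$ and $x_1$). The one point to make explicit is that the lower bound on the edge-type active pairs such as $(u,a)$ requires first translating the tree so that $\|x_i\|\ge c>0$ for all $i$ (the $\delta$-condition alone controls only the diagonal $\|f(a)-f(b)\|$); this is exactly the ``normalize''/shift step, which the paper performs at the outset.
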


In this paper we find a submetric test space for reflexivity
(Theorem \ref{T:SubMetRef}). As we have mentioned above (Problem
\ref{P:MetTSRefl}) the problem of existence of a \emph{metric}
test space for reflexivity remains open.

\section{Dual non-RNP spaces, proof of Theorem
\ref{T:ExtStegall}}\label{S:DualNonRNP}

\begin{proof} It is well-known (see \cite[Theorems A and
2]{Ste75}) that a dual Banach space $X^*$ does not have the \RNP\
if and only if $X$ contains a separable subspace $Y$ such that
$Y^*$ is nonseparable. First we prove this result in the case
where $X$ is separable. We use the construction of Stegall
\cite[Theorem 1]{Ste75} (see also \cite[pp.~192--195]{DU77}). In
the case where $X$ is a separable Banach space and $X^*$ is a
nonseparable Banach space, and $\ep>0$, he constructed

\begin{itemize}

\item A collection $\{x_{n,i}\}_{n=0,~i=0}^{\infty\hskip0.45cm
2^n-1}$ of vectors in $X$ satisfying $||x_{n,i}||<1+\ep$.

\item A collection $\{W_{n,i}\}_{n=0,~i=0}^{\infty\hskip0.45cm
2^n-1}$ of nonempty weak$^*$ compact convex subsets in $B_{X^*}$
(the unit ball of $X^*$) such that

\begin{equation}\label{E:dyadic}
W_{n,i}\supset W_{n+1,2i}\cup W_{n+1,2i+1}
\end{equation}

\begin{equation}\label{E:disjoint}
\{W_{n,i}\}_{i=0}^{2^n-1}~\hbox{ are pairwise disjoint.}
\end{equation}
\end{itemize}

These collections are such that if by $\Delta$ we denote the set
\[\bigcap_{n=0}^\infty\left(\bigcup_{i=0}^{2^n-1}
W_{n,i}\right),\] by $C(\Delta)$ we denote the space of weak$^*$
continuous functions on $\Delta$, by $h_{n,i}$ denote the
indicator function of $W_{n,i}\cap\Delta$, and by $T:X\to
C(\Delta)$ denote the natural embedding, the following condition
holds:
\begin{equation}\label{E:Stegall}
\sum_{n=0}^\infty\sum_{i=0}^{2^n-1}||Tx_{n,i}-h_{n,i}||<\ep.
\end{equation}
Observe that the function $h_{n,i}$ is continuous on $\Delta$
because of the conditions \eqref{E:dyadic} and \eqref{E:disjoint}.
\medskip

First we use these collections to construct a bounded
$\delta$-tree in $X^*$. The existence of such tree is well known,
see \cite[p.~114]{BL00}. We present the details because some of
the specific properties of our construction are needed for the
embedding of $D_\omega$.
\medskip

We pick a vector (arbitrarily) in each of the sets $\{\Delta\cap
W_{n,i}\}_{i=0}^{2^n-1}$. To conform with our notation for trees
we denote the vector picked in $\Delta\cap W_{n,i}$ by $y^n_j$,
where $j=i+2^n$, so $j=2^n,\dots,2^{n+1}-1$. We define $y^n_j$
with $j=2^{n-1},\dots,2^{n}-1$ by
\begin{equation}\label{E:defy_j}
y_j^n=\frac12(y^n_{2j}+y^n_{2j+1}).
\end{equation}
Next we define $y^n_j$ with $j=2^{n-2},\dots,2^{n-1}-1$ using
\eqref{E:defy_j}. We continue in an obvious way and define all
$\{y^n_j\}$ for $j=1,\dots,2^{n+1}-1$.\medskip

Our next purpose is to show that for each $j=1,\dots,2^{n}-1$ the
condition
\begin{equation}\label{E:x-sep}
|(y^n_{2j}-y^n_{2j+1})(x_{k+1,2r})|>1-2\ep
\end{equation}
holds, where $k$ and $r$ are determined by the condition:
$j=2^k+r$ with  $r<2^k$. To show this we observe, by using
\eqref{E:defy_j}, that $y_{2j}^n$ is a convex combination of
\begin{equation}\label{E:coll1}
y_m^n\quad\hbox{for }m=2^n+2^{n-k}r,
2^n+2^{n-k}r+1,\dots,2^n+2^{n-k}r+2^{n-k-1}-1.
\end{equation}
On the other hand, $y_{2j+1}^n$ for $j=2^k+r$ with  $r<2^k$ is a
convex combination of
\begin{equation}\label{E:coll2}
y_m^n\quad\hbox{for }m=2^n+2^{n-k}r+2^{n-k-1},
2^n+2^{n-k}r+2^{n-k-1}+1,\dots,2^n+2^{n-k}r+2^{n-k}-1.
\end{equation}

Observe that elements of \eqref{E:coll1} are contained in
$\Delta\cap W_{k+1,2r}$ and elements of \eqref{E:coll2} are
contained in $\Delta\cap W_{k+1,2r+1}$. Therefore $h_{k+1,2r}$
satisfies $h_{k+1,2r}(z)=1$ for each $z$ in \eqref{E:coll1} and
$h_{k+1,2r}(z)=0$ for each $z$ in \eqref{E:coll2}. Applying
\eqref{E:Stegall} we get
\begin{equation}\label{E:ineq1}
z(x_{k+1,2r})>1-\ep
\end{equation}
for each $z$ in \eqref{E:coll1} and
\begin{equation}\label{E:ineq2}
|z(x_{k+1,2r})|<\ep
\end{equation}
for each $z$ in \eqref{E:coll2}. It is clear that \eqref{E:ineq1}
and \eqref{E:ineq2} continue to hold if $z$ is a convex
combination of vectors in \eqref{E:coll1} and \eqref{E:coll2},
respectively. Inequality \eqref{E:x-sep} follows.

We construct such finite sequences $\{y^n_j\}_{j=1}^{2^{n+1}-1}$
for each $n=0,1,2,\dots$. We introduce $\{y_j\}_{i=1}^\infty$ by
$y_j=w^*-\lim_{n\to\infty} y_j^n$. It is clear that
\eqref{E:x-sep} implies
\begin{equation}\label{E:x-sep2}
|(y_{2j}-y_{2j+1})(x_{k+1,2r})|\ge 1-2\ep.
\end{equation}
Therefore $||y_{2j}-y_{2j+1}||\ge\frac{1-2\ep}{1+\ep}$ and the
sequence $\{y_j\}_{i=1}^\infty$ forms a $\delta$-tree with
$\delta=\frac{1-2\ep}{2(1+\ep)}$. It is also clear that this tree
is contained in the unit ball of $X^*$.
\medskip

Observe that using \eqref{E:Stegall} in the case where $n=i=0$ we
get $z(x_{0,0})\ge 1-\ep$ for each $z\in \Delta$. Thus
$y_j(x_{0,0})\ge 1-\ep$ and
\begin{equation}\label{E:ybelow}
 ||y_j||\ge\frac{1-\ep}{1+\ep}\quad\hbox{for
each }~j\in\mathbb{N}. \end{equation}

We need to derive one more consequence of inequalities
\eqref{E:ineq1} and \eqref{E:ineq2} and our construction. We
define the \emph{tail} of $y_t$ in the tree $\{y_j\}_{j=1}^\infty$
as the set
\[\{y_t,y_{2t},y_{2t+1},y_{4t}, y_{4t+1}, y_{4t+2},
y_{4t+3},\dots,y_{2^kt+1},\dots,y_{2^kt+2^k-1},\dots \}.\] (This
set can be informally described as the set of all ``branches''
which ``grow'' out of $y_t$.) We are going to use the observation
that \eqref{E:ineq1} and \eqref{E:ineq2} imply that
\begin{equation}\label{E:tail2j}
|y_m(x_{k+1,2r})|\ge 1-\ep
\end{equation}
for all $y_m$ in the tail of $y_{2j}$ and
\begin{equation}\label{E:tail2j+1}
|y_m(x_{k+1,2r})|\le \ep
\end{equation}
for all $y_m$ in the tail of $y_{2j+1}$.\medskip

Now we construct a bilipschitz embedding of $D_\omega$ into $X^*$.
Vertices of $D_0$ are mapped in the following way: one vertex is
mapped onto $0\in X^*$ and the other onto $y_1$ (the first element
of the tree). We continue in the following way: two new vertices
of $D_1$ are mapped onto $\frac{y_2}2$ and $\frac{y_3}2$,
respectively. The result will be a bilipschitz image of $D_1$
because by \eqref{E:x-sep2}, we have $\frac{1-2\ep}{1+\ep}\le
||y_2-y_3||\le ||y_2||+||y_3||\le 2$. We proceed in an obvious
way. To make this more clear we describe the next step.\medskip

In the obtained image of $D_1$ two edges correspond to
$\frac{y_2}2$ and two edges correspond to $\frac{y_3}2$. Here and
below we say that an edge \emph{corresponds to a vector} $z$ if
the difference between the images of the ends of the edge is $\pm
z$. In the diamond graphs edges are replaced by quadrilaterals. In
the images we replace the corresponding vectors by parallelograms.
When we say that a vector $z$ corresponding to an edge $uv$ is
\emph{replaced by a parallelogram with sides $x$ and $y$}, where
$x+y=z$, we mean the following. If $z=f(v)-f(u)$ and $u,a,v,b$ is
the quadrilateral which replaces the edge $uv$, then $f(a)=f(u)+x$
and $f(b)=f(u)+y$, so for the obtained mapping the edges $ua$ and
$bv$ correspond to $x$ and the edges $ub$ and $av$ correspond to
$y$.\medskip

Now we return to the construction of the embedding of $D_\omega$.
The vector $\frac{y_2}2$ is replaced by a parallelogram with sides
$\frac{y_4}4$ and $\frac{y_5}4$, and the mapping $f$ is extended
in the described in the previous paragraph way to each
quadrilateral which replaces edges corresponding to $\frac{y_2}2$.
In the next step the edge corresponding to $\frac{y_3}2$ is
replaced by a parallelogram with sides $\frac{y_6}4$ and
$\frac{y_7}4$, and the mapping $f$ is extended in the described in
the previous paragraph way to each quadrilateral which replaces
edges corresponding to $\frac{y_3}2$. So on.
\medskip

Since $||y_j||\le 1$ for each $j\in\mathbb{N}$, the constructed in
such a way embedding $f$ of $D_\omega$ into $X^*$ is
$1$-Lipschitz. It remains to show that it is bilipschitz. Here we
use some notions introduced by Johnson and Schechtman \cite{JS09}
in their study of embeddings of finite diamonds (see also
\cite[Section 9.3.2]{Ost13b}).\medskip

Namely, for any edge $uv$ in $D_k$ we let $S(u,v)$ be the union of
the following sequence of sets of vertices:

\begin{enumerate}
\item Vertices of the quadrilateral  which replaces the edge $uv$

\item Vertices of the quadrilaterals which replace the edges of
the quadrilateral from the previous item.

\item Vertices of the quadrilaterals which replace the edges of
the quadrilateral from the previous item.

\item So on.

\end{enumerate}

We call the set $D(u,v)$ a \emph{subdiamond} of $D_\omega$. It is
easy to see that for any two vertices $w,z\in D_\omega$ there is a
well-defined notion of the \emph{smallest subdiamond} containing
them. Let $D(u,v)$ be the smallest subdiamond containing $w$ and
$z$. Let $u,a,v,b$ be the quadrilateral which replaces the edge
$uv$ (when we form the next diamond). Then
\[D(u,v)=\left(D(u,a)\cup D(a,v)\right)\bigcup\left(D(u,b)\cup
D(b,v)\right).\]

We call the sets  $\left(D(u,a)\cup D(a,v)\right)$ and
$\left(D(u,b)\cup D(b,v)\right)$ the \emph{$a$-side} of the
subdiamond $D(u,v)$ and the \emph{$b$-side} of the subdiamond
$D(u,v)$, respectively. There are two possibilities:

\begin{itemize}

\item $w$ and $z$ are on the same side of $D(u,v)$.

\item $w$ and $z$ are on different sides of $D(u,v)$.

\end{itemize}

It is clear that in the first case we may assume that both $w$ and
$z$ are on the $a$-side of $D(u,v)$. Also since $D(u,v)$ is the
\emph{smallest} subdiamond containing $w$ and $z$, $w$ and $z$
cannot be both in $D(u,a)$ or both in $D(a,v)$. So we may assume
that $w\in D(u,a)$ and $z\in D(a,v)$.
\medskip

We have $f(a)-f(u)=\frac{y_{2j}}{2^k}$ and
$f(v)-f(a)=\frac{y_{2j+1}}{2^k}$ for some $j\in\mathbb{N}$, where
$k$ is determined by $2j=2^k+r$ with $0\le r<2^k$. (It can be that
$f(a)-f(u)=\frac{y_{2j+1}}{2^k}$ and
$f(v)-f(a)=\frac{y_{2j}}{2^k}$, but for our argument this does not
matter. However, to deal with the alternative case we would need
analogues of \eqref{E:tail2j} and \eqref{E:tail2j+1}
$x_{k+1,2r+1}$.)
\medskip

One can prove by induction the following two statements.

\begin{itemize}

\item[{\bf (a)}] The difference $f(a)-f(w)$ is a linear
combination with nonnegative coefficients of vectors contained in
the tail of $y_{2j}$ and the sum $\sigma_{a,w}$ of the
coefficients of this linear combination is equal to
$d_{D_\omega}(a,w)$.

\item[{\bf (b)}] The difference $f(z)-f(a)$ is a linear
combination with nonnegative coefficients of vectors contained in
the tail of $y_{2j+1}$ and the sum $\sigma_{z,a}$ of the
coefficients of this linear combination is equal to
$d_{D_\omega}(z,a)$.

\end{itemize}

To estimate $||f(z)-f(w)||$ from below we need to consider two
cases: $d_{D_\omega}(z,a)\le d_{D_\omega}(a,w)$ and
$d_{D_\omega}(z,a)\ge d_{D_\omega}(a,w)$. We consider the first
case only, the second case is similar. We have
\[\begin{split}||f(z)-f(w)||&\ge
\left|\frac{(f(z)-f(a))(x_{k+1,r})+(f(a)-f(w))(x_{k+1,r})}{1+\ep}\right|\\
&\stackrel{{\bf(a)}\&{\bf (b)}\&\eqref{E:tail2j}\&\eqref{E:tail2j+1}}{\ge}\frac{d_{D_\omega}(a,w)(1-\ep)-d_{D_\omega}(z,a)\ep}{1+\ep}\\
&\ge\frac{d_{D_\omega}(a,w)(1-2\ep)}{1+\ep}\ge
\frac{d_{D_\omega}(w,z)(1-2\ep)}{2(1+\ep)}.
\end{split}
\]

Now we consider the {\bf different sides} case. In this case there
are two subcases:

\begin{itemize}

\item[{\bf (A)}] Either we have both $d_{D_\omega}(w,u)\le
d_{D_\omega}(w,v)$ and $d_{D_\omega}(z,u)\le d_{D_\omega}(z,v)$,
or both $d_{D_\omega}(w,u)\ge d_{D_\omega}(w,v)$ and
$d_{D_\omega}(z,u)\ge d_{D_\omega}(z,v)$.

\item[{\bf (B)}] One of the vertices $w$ and $z$ closer to $u$ and
the other is closer to $v$. We may assume $d_{D_\omega}(w,u)<
d_{D_\omega}(w,v)$ and $d_{D_\omega}(z,u)> d_{D_\omega}(z,v)$.

\end{itemize}

In the subcase {\bf (A)} we use almost the same argument as above.
We may assume that both $w$ and $z$ are at least as close to $u$
as to $v$ and that  $d_{D_\omega}(w,u)\ge d_{D_\omega}(z,u)$. We
have $f(a)-f(u)=\frac{y_{2j}}{2^k}$ and
$f(b)-f(u)=\frac{y_{2j+1}}{2^k}$ for some $j\in\mathbb{N}$, where
$k$ is determined by $2j=2^k+r$ with $0\le r<2^k$. (It can happen
that $f(a)-f(u)=\frac{y_{2j+1}}{2^k}$ and
$f(b)-f(u)=\frac{y_{2j}}{2^k}$, but this case can be treated
similarly.) We also may assume that $w$ is on the
$a$-side.\medskip

One can prove by induction the following two statements.

\begin{itemize}

\item[{\bf (c)}] The difference $f(w)-f(u)$ is a linear
combination with nonnegative coefficients of vectors contained in
the tail of $y_{2j}$ and the sum $\sigma_{w,u}$ of the
coefficients of this linear combination is equal to
$d_{D_\omega}(w,u)$.

\item[{\bf (d)}] The difference $f(z)-f(u)$ is a linear
combination with nonnegative coefficients of vectors contained in
the tail of $y_{2j+1}$ and the sum $\sigma_{z,u}$ of the
coefficients of this linear combination is equal to
$d_{D_\omega}(z,u)$.

\end{itemize}

We have
\[\begin{split}||f(w)-f(z)||&\ge
\left|\frac{(f(w)-f(u))(x_{k+1,r})+(f(u)-f(z))(x_{k+1,r})}{1+\ep}\right|\\
&\stackrel{{\bf(c)}\&{\bf (d)}\&\eqref{E:tail2j}\&\eqref{E:tail2j+1}}{\ge}\frac{d_{D_\omega}(w,u)(1-\ep)-d_{D_\omega}(z,u)\ep}{1+\ep}\\
&\ge\frac{d_{D_\omega}(w,u)(1-2\ep)}{1+\ep}\ge
\frac{d_{D_\omega}(w,z)(1-2\ep)}{2(1+\ep)}.
\end{split}
\]

Now we consider subcase {\bf (B)}. We may assume that $w$ is
contained in the subdiamond $D(u,a)$ and $z$ is contained in the
subdiamond $D(b,v)$, where $u,a,v,b$ is the quadrilateral which
replaces the edge $uv$. We also may assume that
$f(a)-f(u)=\frac{y_{2j+1}}{2^k}$,
$f(v)-f(b)=\frac{y_{2j+1}}{2^k}$, and
$f(b)-f(u)=\frac{y_{2j}}{2^k}$ for some $j\in\mathbb{N}$, where
$k$ is determined by $2j=2^k+r$ with $0\le r<2^k$. One can prove
by induction the following statement.

\begin{itemize}

\item[{\bf (e)}] The differences $f(w)-f(u)$ and $f(z)-f(b)$ are
linear combinations with nonnegative coefficients of vectors
contained in the tail of $y_{2j+1}$ and the sums $\sigma_{w,u}$
and $\sigma_{z,b}$ of the coefficients of these linear
combinations are equal to $d_{D_\omega}(w,u)$ and
$d_{D_\omega}(w,u)$, respectively.

\end{itemize}

Observe that in the considered case the diameter of $D(u,v)$ is
equal to $\frac1{2^{k-1}}$, the diameters of $D(u,a)$ and $D(b,v)$
are equal to $\frac1{2^k}$. On the other hand, we have
\[\begin{split}||&f(z)-f(w)||\\&\ge
\left|\frac{(f(b)-f(u))(x_{k+1,r})+(f(z)-f(b))(x_{k+1,r})+(f(u)-f(w))(x_{k+1,r})}{1+\ep}\right|\\
&\stackrel{{\bf(e)}\&\eqref{E:tail2j}\&\eqref{E:tail2j+1}}{\ge}\frac{\frac1{2^k}(1-\ep)-d_{D_\omega}(z,b)\ep-d_{D_\omega}(u,w)\ep}{1+\ep}\\
&\ge\frac1{2^k}\cdot\frac{1-3\ep}{1+\ep}\\&\ge
\frac{d_{D_\omega}(w,z)(1-3\ep)}{2(1+\ep)}.
\end{split}
\]
This completes the proof in the case where $X^*$ is a nonseparable
dual of a separable Banach space.
\medskip

Now we consider the case where $X$ is nonseparable, but contains a
separable subspace $Y$ such that $Y^*$ is nonseparable. In this
case we apply Stegall's construction mentioned at the beginning of
the proof to $Y$ and denote the obtained collection of vectors in
$Y\subset X$ by $\{x_{n,i}\}_{n=0,~i=0}^{\infty\hskip0.45cm
2^n-1}$, and the obtained collection of nonempty weak$^*$ compact
convex subsets in $B_{Y^*}$ by
$\{V_{n,i}\}_{n=0,~i=0}^{\infty\hskip0.45cm 2^n-1}$. We let
$W_{n,i}$ be the set of all norm preserving extensions of
functionals of the set $V_{n,i}$ to the whole space $X$. It is
clear that $W_{n,i}$ are nonempty weak$^*$ compact convex subsets
in $B_{X^*}$, and that they satisfy the conditions
\eqref{E:dyadic} and \eqref{E:disjoint}. It is also easy to verify
that if we construct $\Delta$ in the same way as before, the
condition \eqref{E:Stegall} also continues to hold. Therefore
everything in the proof can be done in the same way as in the case
where $X$ is separable.
\end{proof}

\section{A submetric test space for the infinite tree
property, proof of Theorem
\ref{T:SubInfTree}}\label{S:SubMetrJames}

\begin{proof}[Proof of Theorem
\ref{T:SubInfTree}] On one hand, if we analyze the proof of
Theorem \ref{T:BulgDiamond} in \cite{Ost11}, we see that we used
the bilipschitz condition only for pairs of points which are in
the same quadrilateral.
\medskip

On the other hand, let $\{x_i\}_{i=1}^\infty$ be a bounded
$\delta$-tree in a Banach space $Z$. First we shift the tree in
order to achieve the situation in which it is bounded both from
below (in the sense that $\exists c>0~\forall
i\in\mathbb{N}~||x_i||\ge c$) and from above. Now we construct the
image of the submetric space $(D_\omega,S_\omega)$ in the
following way (the construction of \cite{Ost11} backward):

We map vertices of $D_0$ onto $0$ and $x_1$, respectively. We map
the new vertices $a$ and $b$ of the quadrangle which replaces
$D_0$ to $x_2/2$ and $x_3/2$, respectively. It is clear that
because $x_1$ is the sum of $x_2/2$ and $x_3/2$, that all edges of
$D_1$ correspond to one of them (correspond in the sense that they
are differences between the vectors corresponding to the
vertices).\medskip

We continue as follows. Since $x_2/2=x_4/4+x_5/4$ we let the edges
of the second level in quadruples having $x_2/2$ as the difference
between the top and the bottom to be: (the bottom)$+x_4/4$ and
(the bottom)$+x_5/4$. We continue in an obvious way.\medskip

It is easy to verify that the fact that the tree is a bounded
$\delta$-tree and norms of its elements are bounded away from zero
implies that we get a partial bilipschitz embedding of the
submetric space $(D_\omega,S_\omega)$ into the Banach space $Z$.
(To visualize the proof it is worthwhile to sketch a $D_2$ and to
label edges by differences between the vectors corresponding to
their end vertices.) \end{proof}

\section{Classes of metric spaces which do not admit bilipschitz embeddings into spaces with the Radon-Nikod\'ym
property}\label{S:MetrRNP}

The conditions implying non-embeddability, which we are going to
present in this section are of the type:  Any bilipschitz image of
a metric space $X$ in a Banach space $Y$ contains a set which can
be used to form a bounded divergent martingale (the values of the
martingale are multiples of differences between images of certain
points of the metric space). We use the following result of
Chatterji \cite{Cha68} (see also \cite{BL00}, \cite{Bou83},
\cite{DU77}, and \cite{Pis11}): A Banach space $Y$ has the RNP if
and only if each bounded $Y$-valued martingale converges.

\subsection{Spaces with thick families of geodesics between some pairs of points}

The purpose of this section is to prove the following
generalization of the result of \cite{Ost11}:

\begin{theorem}\label{T:MetrNonRNP} Let $(X,d)$ be a metric space satisfying the following two conditions:

\begin{enumerate}

\item[{\bf (1)}] There are two points $u$ and $v$ in $X$ and
infinitely many marked geodesics between them in the completion
$\widetilde X$ of $X$ such that the following condition is
satisfied. {\rm (Not all of the geodesics between $u$ and $v$ have
to be marked.)}

\item[{\bf (2)}]\label{I:Thick} For any two points $u_0$ and $v_0$
on a marked $uv$-geodesic there are points \[w_0=u_0, w_1, \dots,
w_{n-1}, w_n=v_0\] which also lie on some marked $uv$-geodesic,
and their order on the geodesic coincides with the order in which
they are listed; and there are points $\{z_i,\widetilde
z_i\}_{i=1}^n$ such that

\begin{enumerate}

\item The points $w_0=u_0, z_1, w_1, z_2, \dots, w_{n-1}, z_n,
w_n=v_0$ lie on a marked $uv$-geodesic and are listed in their
order on the geodesic.

\item The points $w_0=u_0, \widetilde z_1, w_1, \widetilde z_2,
\dots, w_{n-1}, \widetilde z_n, w_n=v_0$ lie on a marked
$uv$-geodesic and are listed in their order on the geodesic.

{\rm These geodesic have to be different because of the next two
conditions.}

\item\label{I:SameLevel} $d(w_i,z_i)=d(w_i,\widetilde z_i)$ and
$d(w_{i-1},z_i)=d(w_{i-1},\widetilde z_i)$.

\item \begin{equation}\label{E:Width}\sum_{i=1}^n d(z_i,\widetilde
z_i)\ge c d(u_0,v_0),\end{equation} where $c$ depends on $X$ but
not on the choice of $u_0$ and $v_0$.

\end{enumerate}
\end{enumerate}

Then the metric space $(X,d)$ does not admit a bilipschitz
embedding into a Banach space with the Radon-Nikod\'ym property.
\end{theorem}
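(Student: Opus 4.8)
I would prove the contrapositive in the strong form used throughout Section~\ref{S:MetrRNP}: if $(X,d)$ admits a bilipschitz embedding into a Banach space $Y$, then $Y$ carries a uniformly bounded divergent $Y$-valued martingale, so by Chatterji's criterion quoted above $Y$ fails the RNP. So suppose $f\colon X\to Y$ is an embedding; after rescaling we may assume $\tfrac1L d(x,y)\le\|f(x)-f(y)\|\le d(x,y)$ for all $x,y\in X$. Since $f$ is Lipschitz and $Y$ is complete, $f$ extends (with the same two inequalities, by continuity) to the completion $\widetilde X$, where conditions \textbf{(1)} and \textbf{(2)} supply the marked geodesics between $u$ and $v$.

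\textbf{Building a filtration of geodesic segments.} I would work on the probability space $[0,1]$ with Lebesgue measure $\mu$. At level $0$ there is one cell, all of $[0,1]$, labelled by the pair $(u,v)$. Inductively, let a level-$k$ cell $C$ of measure $m$ carry a label $(p,q)$, where $p,q$ lie on a marked $uv$-geodesic and $\ell:=d(p,q)>0$. Apply condition \textbf{(2)} with $u_0=p$, $v_0=q$ to get $p=w_0,\dots,w_n=q$ and points $z_i,\widetilde z_i$ with $p=w_0,z_1,w_1,\dots,z_n,w_n=q$ and $p=w_0,\widetilde z_1,w_1,\dots,\widetilde z_n,w_n=q$ each lying, in this order, on a marked $uv$-geodesic, with $a_i:=d(w_{i-1},z_i)=d(w_{i-1},\widetilde z_i)$ and $b_i:=d(w_i,z_i)=d(w_i,\widetilde z_i)$ by \ref{I:SameLevel}, and $a_i+b_i=d(w_{i-1},w_i)$, $\sum_i d(w_{i-1},w_i)=\ell$. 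Split $C$ into a left half $C^z$ and a right half $C^{\widetilde z}$ of measure $m/2$ each; subdivide $C^z$ into consecutive subintervals labelled $(w_0,z_1),(z_1,w_1),\dots,(z_n,w_n)$ with lengths proportional to $a_1,b_1,\dots,a_n,b_n$, and $C^{\widetilde z}$ the same way with $z_i$ replaced by $\widetilde z_i$; here \ref{I:SameLevel} is exactly what makes the segment-length sequences of the two branches coincide, so the sub-cells labelled $(w_{i-1},z_i)$ and $(w_{i-1},\widetilde z_i)$ have the same measure $\tfrac{a_i m}{2\ell}$, and similarly for the $b$-labels. Each new label is again a pair of points on a marked $uv$-geodesic, so the recursion continues indefinitely. (One should check that consecutive points in condition \textbf{(2)} are genuinely distinct, so no cell collapses to a point; I would absorb any coincidences by merging.)

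\textbf{The martingale and its divergence.} Define $M_k$ to be the constant $\dfrac{f(q)-f(p)}{d(p,q)}$ on each level-$k$ cell labelled $(p,q)$; since $f$ is $1$-Lipschitz, $\|M_k\|\le1$ everywhere. The martingale property for the generated filtration is a telescoping computation: over $C^z$ the average of $M_{k+1}$ is $\tfrac{m}{2\ell}\sum_i[(f(z_i)-f(w_{i-1}))+(f(w_i)-f(z_i))]=\tfrac{m}{2\ell}(f(q)-f(p))$, likewise over $C^{\widetilde z}$, so the average of $M_{k+1}$ over $C$ is $\tfrac{m}{\ell}(f(q)-f(p))=\int_C M_k\,d\mu$. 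For divergence, fix a level-$k$ cell $C$ labelled $(p,q)$, keep the notation above, and pair the sub-cell labelled $(w_{i-1},z_i)$ with the one labelled $(w_{i-1},\widetilde z_i)$: they have equal measure $\tfrac{a_i m}{2\ell}$, $M_{k+1}$ equals $\tfrac{f(z_i)-f(w_{i-1})}{a_i}$ resp. $\tfrac{f(\widetilde z_i)-f(w_{i-1})}{a_i}$ on them, and $M_k$ is the same constant on both; by the triangle inequality their joint contribution to $\int_C\|M_{k+1}-M_k\|$ is at least $\tfrac{a_i m}{2\ell}\cdot\tfrac{\|f(z_i)-f(\widetilde z_i)\|}{a_i}=\tfrac{m}{2\ell}\|f(z_i)-f(\widetilde z_i)\|$, and the pairing of the sub-cells labelled $(z_i,w_i)$ and $(\widetilde z_i,w_i)$ gives the same. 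Summing over $i$ and using the lower bilipschitz bound and \eqref{E:Width},
\[
\int_{C}\|M_{k+1}-M_k\|\,d\mu\ \ge\ \frac{m}{\ell}\sum_{i=1}^{n}\|f(z_i)-f(\widetilde z_i)\|\ \ge\ \frac{m}{\ell L}\sum_{i=1}^{n}d(z_i,\widetilde z_i)\ \ge\ \frac{m}{\ell L}\,c\,d(p,q)\ =\ \frac{c}{L}\,m .
\]
Summing over all level-$k$ cells, whose measures sum to $1$, gives $\int_{[0,1]}\|M_{k+1}-M_k\|\,d\mu\ge c/L$ for every $k$. A uniformly bounded convergent martingale converges in $L^1$ (dominated convergence), which would force $\|M_{k+1}-M_k\|_{L^1}\to0$; hence $(M_k)$ does not converge, contradicting Chatterji's criterion, and $Y\notin\RNP$.

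\textbf{Main obstacle.} Everything in the last step is the telescoping identity and a triangle inequality; the genuinely delicate part is the combinatorial bookkeeping of the recursion in the middle paragraph — verifying that condition \textbf{(2)} may legitimately be reapplied to every cell label (which uses that the refined points always lie on marked $uv$-geodesics), that the two branches always carry matching segment-length sequences (this is precisely \ref{I:SameLevel}), and that cells never degenerate — together with making the induced filtration on $[0,1]$ precise. The constant $c$ in \eqref{E:Width} and the distortion $L$ enter only through the clean final inequality $c/L$, which is what one needs: a bound independent of $k$.
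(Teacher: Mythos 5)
Your argument is correct, and it reaches the conclusion by a genuinely different martingale construction than the paper's. The paper keeps each $M_k$ equal to the normalized increments of $f$ along a \emph{single} geodesic: at every second refinement it must select, for each $i$, whichever of $z_i,\widetilde z_i$ produces the larger ``kink'' of $f$ (at least one of \eqref{E:z}, \eqref{E:zTilde} must hold because $\|f(z_i)-f(\widetilde z_i)\|\ge \ell\, d(z_i,\widetilde z_i)$), so the filtration depends on the embedding and only the differences $\|M_{2k}-M_{2k-1}\|_1$ are bounded below, the intermediate ones being uncontrolled. You instead run the two competing geodesics in parallel on the two halves of each cell; pairing sub-cells of equal measure (equal precisely because of condition (c)) and using $\|A-Z\|+\|B-Z\|\ge\|A-B\|$ yields the uniform bound $c/L$ on \emph{every} consecutive difference, with no selection step and with a filtration determined by $X$ and the data of condition {\bf (2)} alone, independent of $f$ and $Y$. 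What each buys: the paper's single-geodesic functions are conceptually closer to difference quotients along a curve, while your branching version removes exactly the two technical obstacles the paper has to patch later in the proof of Theorem \ref{T:NotTestSp} --- passing to the odd-indexed subsequence to restore lower bounds on all differences, and the finitely-many-$\sigma$-algebras argument needed because the paper's filtration depends on the embedding. The one loose end you flag, degenerate cells, is indeed harmless: a sub-cell's measure is proportional to the distance between its label points, so coinciding labels yield null cells that can be discarded without affecting either the martingale identity or the $L^1$ lower bound.
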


\begin{proof}
We assume that $(X,d)$ admits a bilipschitz embedding $f:X\to Y$
into a Banach space $Y$ and show that there exists a bounded
divergent martingale  $\{M_i\}_{i=0}^\infty$ on $(0,1]$ with
values in $Y$. We assume that
\begin{equation}\label{E:Bilip}
\ell d(x,y)\le ||f(x)-f(y)||_Y\le d(x,y)
\end{equation}
for some $\ell>0$. We assume that $d(u,v)=1$ (dividing all
distances in $X$ by $d(u,v)$, if necessary).
\medskip

Each function in the martingale $\{M_i\}_{i=0}^\infty$ will be
obtained in the following way. We consider some finite sequence
$V=\{v_i\}_{i=0}^m$ of points on a (not necessarily marked)
$uv$-geodesic, satisfying $v_0=u$, $v_m=v$ and $d(u,v_{k+1})\ge
d(u,v_k)$. We define $M_V$ as the function on $(0,1]$ whose value
on the interval $(d(u,v_k),d(u,v_{k+1})]$ is equal to
\[\frac{f(v_{k+1})-f(v_k)}{d(v_k,v_{k+1})}.\]

It is clear that \eqref{E:Bilip} implies that $||M_V(t)||\le 1$
for any collection $V$ and any $t\in(0,1]$. Also it is clear that
an infinite collection of such functions
$\{M_{V(k)}\}_{k=0}^\infty$ forms a martingale if for each
$k\in\mathbb{N}$ the sequence $V(k)$ contains $V(k-1)$ as a
subsequence . So it remains to to find such increasing collection
of sequences $\{V(k)\}_{k=0}^\infty$ for which the martingale
$\{M_{V(k)}\}_{k=0}^\infty$  diverges. We denote $M_{V(k)}$ by
$M_k$.\medskip

We let $V(0)=\{u,v\}$ and so $M_0$ is a constant function on
$(0,1]$ taking value $f(v)-f(u)$. In the next step we apply the
condition {\bf (2)} to $v_0=v$ and $u_0=u$ and find the
corresponding sequences $\{w_i\}_{i=0}^n$ and $\{z_i,\widetilde
z_i\}_{i=1}^n$. We let $V(1)=\{w_i\}_{i=0}^n$. Observe, that in
this step we cannot claim any nontrivial estimates for
$||M_1-M_0||_{L_1(Y)}$ from below because we have not made any
nontrivial assumptions on this step of the construction. Lower
estimates for martingale differences in our argument are obtained
only for differences of the form $||M_{2k}-M_{2k-1}||_{L_1(Y)}$.
\medskip

We choose $V(2)$ to be of the form
\begin{equation}\label{E:MartSeq} w_0, z'_1, w_1, z'_2, w_n,\dots,
z'_n,w_n,\end{equation} where each $z'_i$ is either $z_i$ or
$\widetilde z_i$ depending on the behavior of the mapping $f$. We
describe this dependence below. Observe that by condition (c) of
Theorem \ref{T:MetrNonRNP}, the corresponding partition of the
interval $(0,1]$ does not depend on whether we choose $z_i$ or
$\widetilde z_i$.\medskip

To make the choice of $z_i'$ we consider the quadrilateral
$w_{i-1}, z_i, w_i, \widetilde z_i$. Inequality \eqref{E:Bilip}
implies $||f(z_i)-f(\widetilde z_i)||\ge\ell d(z_i,\widetilde
z_i)$. Consider two pairs of vectors corresponding to two
different choices of $z'_i$:
\medskip

\noindent{\bf Pair 1:} $f(w_i)-f(z_i)$, $f(z_i)-f(w_{i-1})$.\qquad
{\bf Pair 2:} $f(w_i)-f(\widetilde z_i)$, $f(\widetilde
z_i)-f(w_{i-1})$.
\medskip

The inequality $||f(z_i)-f(\widetilde z_i)||\ge\ell
d(z_i,\widetilde z_i)$ implies that at least one of the following
is true
\begin{equation}\label{E:z}\begin{split}\left\|\frac{f(w_i)-f(z_i)}{d(w_i,z_i)}-\frac{f(z_i)-f(w_{i-1})}{d(z_i,w_{i-1})}\right\|
\ge\frac{\ell}2\, d(z_i,\widetilde
z_i)\left(\frac1{d(w_i,z_i)}+\frac1{d(z_i,w_{i-1})}\right)\end{split}
\end{equation}
or
\begin{equation}\label{E:zTilde}\begin{split}\left\|\frac{f(w_i)-f(\widetilde
z_i)}{d(w_i,\widetilde z_i)}-\frac{f(\widetilde
z_i)-f(w_{i-1})}{d(\widetilde
z_i,w_{i-1})}\right\|\ge\frac{\ell}2\, d(z_i,\widetilde
z_i)\left(\frac1{d(w_i,\widetilde z_i)}+\frac1{d(\widetilde
z_i,w_{i-1})}\right)\end{split}
\end{equation}
We pick $z'_i$ to be $z_i$ if the left-hand side of \eqref{E:z} is
larger than the left-hand side of \eqref{E:zTilde}, and pick
$z'_i=\widetilde z_i$ otherwise.\medskip

Let us estimate $||M_2-M_1||_1$. First we estimate the part of
this difference corresponding to the interval
$\left({d(w_0,w_{i-1})},{d(w_0,w_i)}\right]$. Since the
restriction of $M_2$ to the interval
$\left({d(w_0,w_{i-1})},{d(w_0,w_i)}\right]$ is a two-valued
function, and $M_1$ is constant on the interval, the integral
\begin{equation}\label{E:IntOneInt}\int_{{d(w_0,w_{i-1})}}^{{d(w_0,w_{i})}}||M_2-M_1||dt\end{equation}
can be estimated from below in the following way. Denote the value
of $M_2$ on the first part of the interval by $x$, the value on
the second by $y$, the value of $M_1$ on the whole interval by
$z$, the length of the first interval by $A$ and of the second by
$B$. We have: the desired integral is equal to $A||x-z||+B||y-z||$
and therefore can be estimated in the following way:
\[\begin{split}A||x-z||+B||y-z||&\ge\max\{||x-z||,||y-z||\}\cdot\min\{A,B\}\\&\ge\frac12||x-y||\min\{A,B\}.\end{split}\]

Therefore, assuming without loss of generality that the left-hand
side of \eqref{E:z} is larger than the left-hand side of
\eqref{E:zTilde}, the integral in \eqref{E:IntOneInt} can be
estimated from below by
\[\begin{split}&\frac12\left\|\frac{(f(w_i)-f(z_i))}{d(w_i,z_i)}-\frac{(f(z_i)-f(w_{i-1}))}{d(z_i,w_{i-1})}\right\|
\cdot\min\left\{{d(w_i,z_i)},{d(z_i,w_{i-1})}\right\}
\\&\qquad\ge\frac14\,\ell\, d(z_i,\widetilde z_i)
\left(\frac1{d(w_i,z_i)}+\frac1{d(z_i,w_{i-1})}\right)\cdot\min\left\{{d(w_i,z_i)},{d(z_i,w_{i-1})}\right\}
\\&\qquad\ge\frac14\,\ell\,{d(z_i,\widetilde z_i)}.\end{split}
\]
Summing over all intervals and using the condition
\eqref{E:Width}, we get $||M_2-M_1||\ge \frac14\ell c\,d(u,v)$.
\medskip

Now we apply the condition {\bf (2)} for each pair of consecutive
points in the sequence
\begin{equation}\label{E:listBefore(2)}w_0,
z'_1, w_1, z'_2, w_2,\dots, z'_n,w_n,\end{equation} where each
$z'_i$ is either $z_i$ or $\widetilde z_i$ depending on the choice
made above. We list all of the obtained $w$-points in one list
\begin{equation}\label{E:NewListW} w^1_0, w^1_1, w^1_2,\dots, w^1_{n(1)}, \end{equation} and the
obtained $z$-points as two collections:
\[z^1_1, z^1_2,\dots, z^1_{n(1)}, \] and
\[\widetilde z^1_1, \widetilde z^1_2,\dots, \widetilde
z^1_{n(1)}.\] Observe that the whole sequence \eqref{E:NewListW}
does not have to be on the same marked geodesic, only pieces which
correspond to pairs of consecutive points in the list
\eqref{E:listBefore(2)} are on marked geodesics. However this
implies that any list of the form
\[w^1_0, z'^1_1, w^1_1, z'^1_2, w^1_2,\dots, z'^1_{n(1)}, w^1_{n(1)},\]
where each $z'^1_1$ is either $z^1_1$ or $\widetilde z^1_1$ is on
some (not necessarily marked) $uv$-geodesic. Because of this we
can proceed in the same way as before in our construction of the
$Y$-valued functions $M_3$ and $M_4$. Again, we have no estimate
for $||M_2-M_3||$, but we get the same estimate for $||M_3-M_4||$.
We proceed in an obvious way. As a result we get a bounded
divergent martingale.
\end{proof}

\subsection{Application to the second Laakso
space}\label{S:SecondLaakso}

Our next goal is to show that the second Laakso space
\cite{Laa00}, whose construction we present following Cheeger and
Kleiner \cite[Example 1.4]{CK13} satisfies the conditions of
Theorem \ref{T:MetrNonRNP}, and thus to get a different proof of
the result of Cheeger and Kleiner \cite[Corollary 1.7]{CK09}
stating that this space does not admit bilipschitz embeddings into
Banach spaces with the \RNP.

\begin{example}[{\bf Second Laakso space}]
\label{Ex:LaaksoAhlforsRegularPI} We construct this space as an
inductive limit of graph thickenings, that is, graphs in which
edges are isometric to line segments of the corresponding lengths,
and elements of the edges  (not only ends) are elements of the
metric spaces. We start with a space $X_0$ which has two vertices
and an edge of length $1$ joining them. Given $X_{i-1}$ we
construct $X_i$ in two steps.
\medskip

\noindent{\bf Step 1.} We replace each edge in $X_i$ by a path of
the same length consisting of three edges of equal length. In
other words we trisect each edge in the sense that we insert new
vertices after each third of it. We denoted the set of all new
vertices by $N_i$ and the obtained graph (topologically the same
as $X_{i}$, but with many new vertices) by $X'_i$.
\medskip

\noindent{\bf Step 2.} We consider two copies of $X'_i$ and paste
them at the respective vertices of $N_i$. We introduce $X_{i+1}$
as the obtained graph thickening with its shortest path distance.
More formally, we let $X_{i+1}$ be the set of equivalence classes
of $X_{i}'\times \{0,1\}$ with respect to the equivalence given by
$(v,0)\sim (v,1)$ for all $v\in N_i$ (all other equivalence
classes are one-element sets).
\medskip

We consider $X_{i+1}$ as a metric space with its shortest path
distance. Observe that there are natural isometric embeddings
$X_i\to X_{i+1}$ (we identify $X_i$ with $X_i\times\{0\}$). We let
$X_\omega$ be the union of $X_i$, with the metric
$d_{X_\omega}(u,v)$ defined as $d_{X_\omega}(u,v)=d_{X_i}(u,v)$,
where $i\in\mathbb{N}$ is large enough so that $u,v\in X_i$.
\end{example}

\begin{proposition}\label{P:XomegaThick} The space $X_\omega$ satisfies all conditions
of Theorem \ref{T:MetrNonRNP} if we pick $u$ and $v$ to be the
vertices of $X_0$ and consider all geodesics joining them in any
of $X_i$ as marked.
\end{proposition}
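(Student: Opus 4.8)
The plan is to locate inside $X_\omega$ the ``bigons'' created by the doubling in Step~2 of the construction and to use them as the quadrilaterals demanded by condition {\bf (2)} of Theorem~\ref{T:MetrNonRNP}. The structural fact I would establish first, by unwinding the inductive description, is the following. In $X_n$ every edge has length $3^{-n}$ and $d_{X_n}(u,v)=1$, so every $uv$-geodesic of $X_n$ is a path of exactly $3^n$ edges. When one passes from $X_n$ to $X_{n+1}$, the middle third of each edge $e$ of $X_n$ --- an edge of $X_n'$ both of whose endpoints lie in $N_n$ --- is replaced by \emph{two parallel edges} of length $3^{-(n+1)}$ with the same pair of endpoints and with no other edge between them, while the two outer thirds of $e$ are \emph{not} so doubled (after the pasting their endpoints on the old-vertex side become distinct). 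Thus each edge of $X_{n+1}$ is either a ``bigon edge'' (a middle third of an $X_n$-edge) or a ``solo edge'', each of the two parallel branches of a bigon is a geodesic between the pair of vertices it joins, and --- since a $uv$-geodesic of $X_{n+1}$ projects onto a $uv$-geodesic of $X_n$ and refines it edge by edge --- along any $uv$-geodesic of $X_{n+1}$ the bigon edges form $3^n$ \emph{pairwise disjoint} segments, one in the middle of each edge of the underlying $X_n$-geodesic, of total length $3^n\cdot 3^{-(n+1)}=\tfrac13$.

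Granting this, condition {\bf (1)} is immediate, since already $X_1$ has two distinct $uv$-geodesics and refinement keeps producing new ones. For condition {\bf (2)}, given a marked $uv$-geodesic $\gamma$ in some $X_i$ and points $u_0,v_0$ on it with $\rho:=d(u_0,v_0)>0$, I would pass to $X_{i'}$ for the least $i'\ge i$ with $3^{-i'}\le\rho/4$; then $u_0,v_0$ lie on the refinement $\gamma'$ of $\gamma$ to $X_{i'}$, and after deleting the at most two incomplete $X_{i'}$-edges at the ends of the arc $[u_0,v_0]$ what remains is a concatenation of $m\ge1$ full $X_{i'}$-edges of total length $\ge\rho-2\cdot 3^{-i'}\ge\rho/2$. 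Refining once more to $X_{i'+1}$ produces, along this sub-arc, $m$ pairwise disjoint bigon edges of length $3^{-(i'+1)}$. I take $w_0=u_0$, $w_n=v_0$, and the remaining $w$'s to be the endpoints of these $m$ bigons listed in geodesic order, so that $[u_0,v_0]$ is cut into consecutive pieces each of which is either one of the bigons or a segment containing none of them; on a bigon piece with endpoints $w_{t-1},w_t$ I let $z_t,\widetilde z_t$ be the midpoints of its two branches, whence $d(w_{t-1},z_t)=d(w_{t-1},\widetilde z_t)$ and $d(w_t,z_t)=d(w_t,\widetilde z_t)$ both equal $\tfrac12 3^{-(i'+1)}$, while $d(z_t,\widetilde z_t)=3^{-(i'+1)}$ because a path between the two midpoints must remain inside the bigon or pass through $w_{t-1}$ or $w_t$; on a non-bigon piece I set $z_t=\widetilde z_t$ equal to its midpoint. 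Routing through the $z_t$'s, respectively through the $\widetilde z_t$'s, at the bigon pieces and straight through elsewhere produces two $uv$-geodesics of $X_{i'+1}$ carrying the listed points in the stated order, so (a), (b) and \eqref{I:SameLevel} hold; and $\sum_t d(z_t,\widetilde z_t)=m\cdot 3^{-(i'+1)}=\tfrac13\bigl(m\cdot 3^{-i'}\bigr)\ge\tfrac16\rho$, so \eqref{E:Width} holds with $c=\tfrac16$, uniformly in $u_0$ and $v_0$.

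The one genuinely substantive step is the first paragraph: reading off from the ``trisect, then paste two copies along $N_n$'' recipe that each refinement turns the middle third of every edge into an honest bigon whose two branches are geodesics, and that these bigons, taken at a \emph{single} level, are disjoint and occupy a fixed fraction $\tfrac13$ of every $uv$-geodesic. The subtlety to keep in mind is that bigons arising at different levels are nested rather than disjoint, which is precisely why the construction above is carried out at one fixed level $i'+1$ instead of trying to exhaust the geodesic with bigons of all scales; everything else --- the reduction to the case where $[u_0,v_0]$ is essentially a union of full $X_{i'}$-edges, and the equidistance of the two branch midpoints from the two endpoints of a bigon --- is routine bookkeeping.
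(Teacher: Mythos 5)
Your proof is correct and follows essentially the same route as the paper: both exploit the doubling in Step~2 by taking the two branch midpoints of parallel segments as $z_i,\widetilde z_i$ and by passing to a level fine enough that the boundary effects near $u_0,v_0$ cost only a fixed fraction of $d(u_0,v_0)$. The only (immaterial) difference is that the paper also uses the duplicated old vertices as twin midpoints for the gaps between consecutive bigons, so it covers the whole span between the first and last trisection points and gets $c=\tfrac12$ in \eqref{E:Width} with no degenerate pairs, whereas you cover only the middle thirds and take $z_t=\widetilde z_t$ elsewhere, landing at $c=\tfrac16$; both are perfectly adequate for Theorem~\ref{T:MetrNonRNP}.
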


\begin{proof} Let $u_0$ and $v_0$ be two points on a (marked)
$uv$-geodesic. Let $j\in \mathbb{N}$ be such that $u_0,v_0\in
X_j$. We trisect all edges of $X_j$ and let $a^{1}_0, \dots,
a^{1}_{m(1)}$ be the vertices of $N_j$ on one of the geodesics $S$
joining $u_0$ and $v_0$, in the order, in which we meet them
travelling from $u_0$ to $v_0$. We denote $d_{X_\omega}$ by $d$.
If $d(a^{1}_0,a^{1}_{m(1)})\ge\frac12 d(u_0,v_0)$ (the number
$\frac12$ can be replaced by any other number in (0,1), this will
affect only the constant $c$ in \eqref{E:Width}), we let $n=m(1)$
and the vertices $w_0,w_1,\dots,w_n$ (see condition {\bf (2)} in
Theorem \ref{T:MetrNonRNP}) be given by $w_0=u_0, w_1=a^{1}_1,
\dots, w_{n-1}=a^{1}_{n-1}, w_n=v_0$.
\medskip

If $d(a^{1}_0,a^{1}_{m(1)})<\frac12 d(u_0,v_0)$, we repeat the
trisection (now we trisect edges of $X_{j+1}$) and let $a^{2}_0,
\dots, a^{2}_{m(2)}$ be vertices of the new trisection on $S$. If
$d(a^{2}_0,a^{2}_{m(2)})\ge\frac12 d(u_0,v_0)$, we let $n=m(2)$
and the vertices $w_0,w_1,\dots,w_n$ (see condition {\bf (2)} in
Theorem \ref{T:MetrNonRNP}) be given by $w_0=u_0, w_1=a^{2}_1,
\dots, w_{n-1}=a^{2}_{n-1}, w_n=v_0$.
\medskip

If not, we repeat the trisection, so on. It is clear that there is
$k\in\mathbb{N}$ such that after $k$ trisections  the condition
$d(a^{k}_0,a^{k}_{n(k)})\ge\frac12 d(u_0,v_0)$ is satisfied. At
that point we define $n=n(k)$ and $w_0,\dots,w_n$, by $w_0=u_0,
w_1=a^{k}_1, \dots, w_{n-1}=a^{k}_{n-1}, w_n=v_0$.
\medskip

Now we introduce $z_1,\dots,z_n$ and $\widetilde
z_1,\dots,\widetilde z_n$. Observe that $a^{k}_0, \dots,
a^{k}_{n}$ are vertices in $N_{j+k-1}$. We let $z_i$ $i=1,\dots,n$
be the midpoint on $S$ between $a^{k}_{i-1}$ and $a^{k}_i$. Now we
recall that when we create $X_{j+k}$ we consider two copies of
$X_{j+k-1}$ and paste them at $N_{j+k-1}$. After that (in the
notation introduced above) points $z_i$ are identified with pairs
$(z_i,0)$. We introduce $\widetilde z_i$ as $(z_i,1)$. It remains
to verify that condition \eqref{E:Width} is satisfied.

It is easy to see that $d(z_i,\widetilde z_i)=d(a^{k}_{i-1},
a^{k}_i)$. Therefore
\[\sum_{i=1}^nd(z_i,\widetilde
z_i)=d(a^k_0,a^k_n)\ge\frac12 d(u_0,v_0).\qedhere\]
\end{proof}

Combining Proposition \ref{P:XomegaThick} with  Theorem
\ref{T:MetrNonRNP} we get the following result of Cheeger and
Kleiner \cite[Corollary 1.7]{CK09}.

\begin{corollary} $X_\omega$ does not admit a bilipschitz
embedding into a Banach space with the \RNP.
\end{corollary}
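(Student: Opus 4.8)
The plan is that the Corollary is an immediate consequence of the two preceding results, requiring no new argument. Theorem~\ref{T:MetrNonRNP} asserts that no metric space satisfying its conditions \textbf{(1)} and \textbf{(2)} admits a bilipschitz embedding into a Banach space with the \RNP, and Proposition~\ref{P:XomegaThick} asserts precisely that $X_\omega$ meets these two conditions once one takes $u,v$ to be the two vertices of $X_0$ and declares every $uv$-geodesic lying in some $X_i$ to be marked. So the proof is a single line: apply Theorem~\ref{T:MetrNonRNP} with $X=X_\omega$, the hypotheses being supplied by Proposition~\ref{P:XomegaThick}. (In particular this reproves, without metric differentiation, the Cheeger--Kleiner non-embeddability of the second Laakso space.)

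For orientation let me recall where the substance lies, since from scratch one would have to reprove Proposition~\ref{P:XomegaThick}. Condition \textbf{(1)} (infinitely many marked $uv$-geodesics) is clear from the construction: each pass through Step~1 followed by Step~2 (trisect each edge, then glue two copies of $X_i'$ along the new vertices $N_i$) strictly increases the number of distinct $uv$-geodesics, since after gluing one may route through either sheet between consecutive vertices of $N_i$. Condition \textbf{(2)} is the real point: given $u_0,v_0$ on a marked geodesic $S$, one trisects repeatedly until the trisection vertices on $S$ strictly between $u_0$ and $v_0$ span at least $\frac12 d(u_0,v_0)$, takes those vertices together with $u_0,v_0$ as the points $w_0=u_0,\dots,w_n=v_0$, takes $z_i$ to be the midpoint of $S$ between consecutive trisection vertices, and takes $\widetilde z_i$ to be the second copy of that midpoint created at the ensuing gluing. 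Items (a),(b) of \textbf{(2)} then hold by construction; the ``same level'' condition holds because $z_i$ and $\widetilde z_i$ are two copies of one point, hence equidistant from the two nearest gluing vertices; and the width inequality \eqref{E:Width} holds with $c=\frac12$ because $d(z_i,\widetilde z_i)$ equals the length of the edge whose midpoint it is, so $\sum_{i=1}^n d(z_i,\widetilde z_i)$ telescopes to the $S$-distance between the extreme trisection vertices, which is $\ge\frac12 d(u_0,v_0)$.

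The only point requiring an argument beyond unwinding definitions --- and hence the place I expect to be the main obstacle if one proves the Corollary directly --- is the termination of the iterated trisection: for some finite $k$ the trisection vertices of $X_{j+k-1}$ lying on $S$ strictly between $u_0$ and $v_0$ must span at least half of $d(u_0,v_0)$. This is a consequence of the self-similar geometry of the Laakso construction: under refinement the extreme such vertices march outward toward $u_0$ and $v_0$ and the two uncovered end-gaps shrink geometrically, so their total eventually drops below $\frac12 d(u_0,v_0)$. One should also verify the construction-dependent but geometrically obvious fact that the two copies of an edge-midpoint are at distance equal to the full edge length: any path between them must leave one sheet through a vertex of $N_{j+k-1}$, and the two nearest such vertices are the endpoints of that very edge, so travelling from one copy to the other via either endpoint costs $\frac12+\frac12$ of the edge length. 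Granting these, Theorem~\ref{T:MetrNonRNP} delivers the Corollary with no further computation.
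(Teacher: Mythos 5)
Your proposal matches the paper exactly: the Corollary is obtained in one line by combining Proposition~\ref{P:XomegaThick} (which verifies conditions \textbf{(1)} and \textbf{(2)} for $X_\omega$ with $u,v$ the vertices of $X_0$ and all $uv$-geodesics in any $X_i$ marked) with Theorem~\ref{T:MetrNonRNP}. Your additional recap of where the substance of the Proposition lies is accurate but not needed for the Corollary itself.
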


\subsection{A wider class of metric spaces which do not admit bilipschitz embeddings into \RNP\ spaces}

The purpose of this section is to prove  an ``isomorphic'' version
of Theorem \ref{T:MetrNonRNP}. We need the following notion.

\begin{definition}\label{D:Cuv}
Let $C\in[1,\infty)$ and $u,v$ are two points in a metric space. A
\emph{$C$-geodesic between $u$ and $v$}, also called a
\emph{$Cuv$-geodesic}, is a finite sequence
\[u_0=u, u_1, u_2,\dots,u_n=v\]
of points satisfying
\begin{equation}\label{E:CGeo}\sum_{i=1}^nd(u_i,u_{i-1})\le C\cdot d(u,v).\end{equation} A
$Cuv$-geodesic $v_0,\dots,v_m$ is called an \emph{extension} of a
$Cuv$-geodesic $u_0,\dots,u_n$ if $m\ge n$ and  $u_0,\dots,u_n$ is
a subsequence of $v_0,\dots,v_m$,

\end{definition}

\begin{remark} This notion of a $C$-geodesic is in a certain sense too wide because the distances $d(u_i,u_{i+1})$ are allowed to be ``large''. In
particular, a $Cuv$-geodesic can be trivial: the sequence
$u_0,u_1$ with $u_0=u$ and $u_1=v$ is a $Cuv$-geodesic for each
$C\ge 1$.
\end{remark}

We are going to construct martingales corresponding to bilipschitz
embeddings of metric spaces having such geodesics. In this
connection we introduce the following terminology.

\begin{definition}\label{D:PartRef} Let $u_0=u, u_1, u_2,\dots,u_n=v$ be a
$Cuv$-geodesic. We introduce the numbers
\begin{equation}\label{E:Intervals}
\alpha_k:=\frac{\sum_{i=1}^k d(u_i,u_{i-1})}{\sum_{i=1}^n
d(u_i,u_{i-1})}, k=1,\dots,n, \quad \alpha_0=0. \end{equation}
These numbers induce a partition of the interval $[0,1]$ (for our
purposes it does not matter how we include the points
$\{\alpha_i\}_{i=0}^n$ into the intervals of the partition). We
call this partition the \emph{partition corresponding to the
$Cuv$-geodesic}  $u_0, u_1, u_2,\dots,u_n$.

If we consider a sequence of $Cuv$-geodesics in which each next
$Cuv$-geodesic is an extension of the previous one, we form the
following increasing sequence of partitions of $[0,1]$. The first
partition is the partition corresponding to the first
$Cuv$-geodesic. The second partition is a refinement of the first
partition obtained in the following way. Let $\alpha_i$ and
$\alpha_{i+1}$ be two consecutive points in the fist partition
$w_i$ and $w_{i+1}$ be the corresponding points in the geodesic.
Let $z_j=w_i,\dots,z_k=w_{i+1}$ be the corresponding points in the
extended $Cuv$-geodesic. The interval $[\alpha_i,\alpha_{i+1}]$ in
the refinement is partitioned by the points
\[\begin{split}\alpha_i,\alpha_i+\frac{(\alpha_{i+1}-\alpha_i)d(z_j,z_{j+1})}{\sum_{t=j}^{k-1}d(z_t,z_{t+1})},
&\alpha_i+\frac{(\alpha_{i+1}-\alpha_i)\sum_{t=j}^{j+1}d(z_t,z_{t+1})}{\sum_{t=j}^{k-1}d(z_t,z_{t+1})},
\dots,\\
&\alpha_i+\frac{(\alpha_{i+1}-\alpha_i)\sum_{t=j}^{k-2}d(z_t,z_{t+1})}{\sum_{t=j}^{k-1}d(z_t,z_{t+1})},
\alpha_{i+1}.\end{split}
\]
The same is done for each interval of the partition. This
procedure of refinement is repeated for each further extension of
$Cuv$-geodesics.
\end{definition}

\begin{theorem}\label{T:IsoMNonRNP} Let $(X,d)$ be a metric space for which there are
two points $u$ and $v$ in $X$ and a family of marked
$Cuv$-geodesics such that the following conditions are satisfied:

\begin{enumerate}

\item\label{I:ManyExt} Any marked $Cuv$-geodesic
\[w_0=u, w_1, \dots, w_{n-1}, w_n=v\] has two different marked extensions
\begin{equation}\label{E:ExtZ} z_0=u, z_1,\dots, z_m=v\end{equation} and
\begin{equation}\label{E:ExtZTilde}\widetilde z_0=u, \widetilde z_1,\dots, \widetilde z_m=v.
\end{equation}

These extensions are such that for any sequence $z'_0,
z'_1,\dots,z'_m$ in which each $z'_i$ is either $z_i$ or
$\widetilde z_i$ is also a marked $Cuv$-geodesic. Furthermore, the
extensions \eqref{E:ExtZ} and \eqref{E:ExtZTilde} satisfy the
following conditions:

\begin{enumerate}

\item They have some more common points in addition to $w_0, w_1,
\dots, w_{n-1}, w_n$, and all common points $\{z_i\}_{i\in C}$
have the same indices in both sequences, and form a marked
$Cuv$-geodesic.


\item Have some pairs of distinct points $\{z_i,\widetilde
z_i\}_{i\in D}$ which satisfy

\begin{itemize}

\item Each pair of distinct points is between two pairs of common
points.\medskip

\item
\begin{equation}\label{E:EqualProp}\displaystyle{\frac{d(z_i,z_{i-1})}{d(z_{i},z_{i+1})}=
\frac{d(\widetilde z_i,z_{i-1})}{d(\widetilde
z_{i},z_{i+1})}}.\end{equation}

\item \begin{equation}\label{E:DistGeod}\sum_{i\in D}
d(z_i,\widetilde z_i)\ge c d(u,v),\end{equation} where $c$ does
not depend on the choice of a marked $Cuv$-geodesic \[w_0=u, w_1,
\dots, w_{n-1}, w_n=v.\]

\end{itemize}

\end{enumerate}

\item\label{I:NonAccum} The set of all marked $C$-geodesics
satisfies the following condition of ``non-accumulation of
distortion of partitions'': there exists a constant
$B\in[1,\infty)$ such that if we consider a collection of marked
$Cuv$-geodesics with each next being an extension of the previous
one and consider the corresponding nested family of partitions,
each next refining the previous one \emph{(according to the
construction of Definition \ref{D:PartRef})}, then the final
partition will be $B$-equivalent to the one corresponding to the
last geodesic in the sequence. By $B$-equivalence we mean that
\[\frac1B\le\frac{\alpha_{i+1}-\alpha_i}{\beta_{i+1}-\beta_i}\le
B.\]

\end{enumerate}

Then the metric space $(X,d)$ does not admit bilipschitz
embeddings into Banach spaces with the Radon-Nikod\'ym property.
\end{theorem}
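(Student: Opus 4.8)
The plan is to mimic the proof of Theorem \ref{T:MetrNonRNP}, replacing exact geodesics by $C$-geodesics and adjusting the bookkeeping of partitions accordingly. Suppose $f:X\to Y$ is a bilipschitz embedding into a Banach space $Y$ with $\ell\, d(x,y)\le \|f(x)-f(y)\|_Y\le d(x,y)$ and normalize so that $d(u,v)=1$. As in the earlier proof, to each marked $Cuv$-geodesic $V=\{v_i\}_{i=0}^m$ we associate the step function $M_V$ on $[0,1]$ whose value on the $k$-th interval of the partition corresponding to $V$ (as in Definition \ref{D:PartRef}) is $\frac{f(v_{k})-f(v_{k-1})}{d(v_{k-1},v_{k})}$; again $\|M_V(t)\|\le 1$ pointwise by the upper bilipschitz bound, so all these functions are uniformly bounded. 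The main difference from the metric case is that $M_V$ need not have $L_1$-norm comparable to $\|f(v)-f(u)\|$ (the total length of the $C$-geodesic may be as large as $C$), but that is harmless: we only ever estimate differences $\|M_{2k}-M_{2k-1}\|_1$ from below, exactly as before.

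Next I would build the nested sequence of $Cuv$-geodesics. Start with $V(0)=\{u,v\}$. Given $V(2k)$ (a marked $Cuv$-geodesic), apply condition \ref{I:ManyExt} to each consecutive pair of its points to obtain, for each such pair, the two marked extensions $\{z_i\}$ and $\{\widetilde z_i\}$ with the common points $\{z_i\}_{i\in C}$ and the ``active'' pairs $\{z_i,\widetilde z_i\}_{i\in D}$; let $V(2k+1)$ be the $Cuv$-geodesic obtained by inserting only the common points $\{z_i\}_{i\in C}$ (this is marked by condition (a)). Then form $V(2k+2)$ by additionally inserting, for each active pair, a choice $z'_i\in\{z_i,\widetilde z_i\}$ dictated by $f$: on the quadrilateral-like configuration $z_{i-1},z_i,z_{i+1},\widetilde z_i$ the lower bilipschitz bound gives $\|f(z_i)-f(\widetilde z_i)\|\ge \ell\, d(z_i,\widetilde z_i)$, so at least one of the two normalized ``broken-line vs.\ chord'' discrepancies
\[
\left\|\frac{f(z_{i+1})-f(z_i)}{d(z_i,z_{i+1})}-\frac{f(z_i)-f(z_{i-1})}{d(z_{i-1},z_i)}\right\|,\quad
\left\|\frac{f(z_{i+1})-f(\widetilde z_i)}{d(\widetilde z_i,z_{i+1})}-\frac{f(\widetilde z_i)-f(z_{i-1})}{d(z_{i-1},\widetilde z_i)}\right\|
\]
is at least $\tfrac{\ell}{2}\,d(z_i,\widetilde z_i)\bigl(\tfrac1{d(z_i,z_{i+1})}+\tfrac1{d(z_{i-1},z_i)}\bigr)$; pick $z'_i$ to realize the larger one. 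Here condition \eqref{E:EqualProp} is exactly what is needed so that the \emph{partition} of $[0,1]$ produced by Definition \ref{D:PartRef} is the same whether we pick $z_i$ or $\widetilde z_i$ — so $M_{V(2k+2)}$ is a well-defined refinement of $M_{V(2k+1)}$ regardless of the choices.

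Then, on each subinterval of the partition corresponding to an active index $i$, $M_{V(2k+2)}$ is a two-valued function (values $\frac{f(z_{i+1})-f(z'_i)}{d(z'_i,z_{i+1})}$ and $\frac{f(z'_i)-f(z_{i-1})}{d(z_{i-1},z'_i)}$, on subintervals of lengths proportional to $d(z_{i-1},z'_i)$ and $d(z'_i,z_{i+1})$) while $M_{V(2k+1)}$ is constant there; the elementary inequality $A\|x-z\|+B\|y-z\|\ge\tfrac12\|x-y\|\min\{A,B\}$ used in the proof of Theorem \ref{T:MetrNonRNP} gives a lower bound for the contribution of that subinterval proportional to $d(z_i,\widetilde z_i)$ times the length of the enclosing interval from the $2k$-th partition. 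Summing over active indices and using \eqref{E:DistGeod}, the total over one enclosing interval is at least (a constant multiple of $\ell c$) times that interval's length, and summing over all enclosing intervals gives $\|M_{V(2k+2)}-M_{V(2k+1)}\|_1\ge \kappa\ell c$ for an absolute constant $\kappa>0$ independent of $k$. Hence $\{M_{V(k)}\}$ is a bounded, non-$L_1$-convergent $Y$-valued martingale, so by Chatterji's theorem $Y\notin\RNP$.

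The step I expect to be the real obstacle is verifying that $\{M_{V(k)}\}_{k}$ is genuinely a \emph{martingale} with respect to the nested partitions — i.e.\ that $M_{V(k-1)}$ is the conditional expectation of $M_{V(k)}$ on the coarser $\sigma$-algebra. This is where condition \ref{I:NonAccum} (the ``non-accumulation of distortion'') enters: after many rounds of refinement the partition actually appearing may differ from the idealized one of Definition \ref{D:PartRef}, and one must check that $B$-equivalence of these partitions is enough to (i) keep $\|M_{V(k)}\|_\infty$ bounded, (ii) preserve the martingale (or approximate-martingale) property up to the constant $B$, and (iii) keep the lower bound $\|M_{2k}-M_{2k-1}\|_1\gtrsim \ell c/B$ bounded away from $0$. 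One clean way around any residual difficulty is to observe that telescoping gives $M_{V(k)}=M_0+\sum_{j}(M_{V(j)}-M_{V(j-1)})$ where each increment is supported on, and has mean zero over, each interval of the preceding partition; so $\{M_{V(k)}\}$ is exactly a martingale for the filtration generated by the \emph{actual} partitions, and the role of $B$ is only to control the pointwise norms and the lengths $\min\{A,B\}$ appearing in the lower bound. Modulo that bookkeeping, the argument is a routine adaptation of the proof of Theorem \ref{T:MetrNonRNP}.
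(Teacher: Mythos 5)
Your overall strategy is the paper's: build a nested sequence of marked $Cuv$-geodesics, encode them as step functions on the nested partitions of Definition \ref{D:PartRef}, use the inequality $\|f(z_i)-f(\widetilde z_i)\|\ge\ell\, d(z_i,\widetilde z_i)$ together with \eqref{E:EqualProp} to choose $z_i'$ so that consecutive martingale differences are large, sum using \eqref{E:DistGeod}, and invoke Chatterji's theorem. But there is a genuine gap in exactly the step you flagged as the obstacle, and your proposed way around it does not work. You normalize the value of $M_V$ on the $k$-th interval by the metric distance $d(v_{k-1},v_k)$. For true geodesics this agrees (up to the global factor $d(u,v)$) with normalizing by the length $\alpha_k-\alpha_{k-1}$ of the partition interval, but for $C$-geodesics it does not, and it destroys the martingale property: if $z_j=w_i,\dots,z_k=w_{i+1}$ are the points of the extension lying between two consecutive points of the previous geodesic, then the average of your $M_{V(j)}$ over $[\alpha_i,\alpha_{i+1}]$ equals $\bigl(f(w_{i+1})-f(w_i)\bigr)/\sum_{t=j}^{k-1}d(z_t,z_{t+1})$, whereas your $M_{V(j-1)}$ equals $\bigl(f(w_{i+1})-f(w_i)\bigr)/d(w_i,w_{i+1})$ there. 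These differ whenever the extension strictly increases the broken-line length, which is precisely the situation the ``isomorphic'' theorem is designed to handle. Hence your claim that each increment ``has mean zero over each interval of the preceding partition'' is false, and your $\{M_{V(k)}\}$ is not a martingale.

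The paper's fix is to normalize by the partition interval length instead: $M(t)=\bigl(f(w_{i+1})-f(w_i)\bigr)/(\alpha_{i+1}-\alpha_i)$ on $[\alpha_i,\alpha_{i+1}]$. With the proportional refinement of Definition \ref{D:PartRef} the conditional-expectation identity then holds exactly and automatically. The price is that pointwise boundedness is no longer the trivial bound $\|M_V\|_\infty\le 1$: one gets $\|M_0\|_\infty\le C\,d(u,v)$ from the definition of a $C$-geodesic, and after many rounds of refinement the ratio of interval length to distance can drift; that is precisely what condition \ref{I:NonAccum} is for, as the $B$-equivalence keeps $\|M_k\|_\infty$ bounded for all $k$. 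So you have the two difficulties inverted: boundedness is the part that needs condition \ref{I:NonAccum}, while the martingale property comes for free once the normalization is right. The divergence estimate then goes through essentially as in your sketch (the paper obtains $\|M_{2n}-M_{2n-1}\|_1\ge\frac{\ell}{4}c\,d(u,v)$, the $\gamma$-factors cancelling exactly). A smaller point: condition \ref{I:ManyExt} is applied to the whole marked $Cuv$-geodesic $V(2k)$ at once, not to each consecutive pair of its points separately; \eqref{E:DistGeod} is a global bound over the whole extension, so your pair-by-pair application would require a localized version of that hypothesis which is not assumed.
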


\begin{proof}
We assume that $(X,d)$ admits a bilipschitz embedding $f:X\to Y$
into a Banach space $Y$ and show that there exists a bounded
divergent martingale on $[0,1]$ with values in $Y$. We assume that
\begin{equation}\label{E:Bilip2}
\ell d(x,y)\le ||f(x)-f(y)||_Y\le d(x,y)
\end{equation}
for some $\ell>0$.\medskip

We are going to construct a bounded divergent $Y$-valued
martingale. We start our construction of the martingale by picking
any marked $Cuv$-geodesic $w_0=u, w_1, \dots, w_{n-1}, w_n=v$. Let
$\alpha_0=0,\alpha_1,\dots,\alpha_n=1$ be the ends of the
corresponding partition of $[0,1]$. We introduce the first
function of the martingale, $M_0:[0,1]\to Y$, by
\[M_0(t)=\frac{f(w_{i+1})-f(w_{i})}{\alpha_{i+1}-\alpha_i}\quad \hbox{for}\quad t\in\left[\alpha_{i}, \alpha_{i+1}\right]\]
All functions of our martingale will be of this type for
partitions obtained by sequences of refinements done according to
Definition \ref{D:PartRef}. We observe that condition
\ref{I:NonAccum} of Theorem \ref{T:IsoMNonRNP} in combination with
\eqref{E:Intervals} and \eqref{E:Bilip2} implies the boundedness
of the martingale. In fact, in the case where
$[\alpha_i,\alpha_{i+1}]$ form a partition corresponding to a
$Cuv$-geodesic $w_0=u, w_1, \dots, w_{n-1}, w_n=v$ we have
\[\left\|\frac{f(w_{i+1})-f(w_{i})}{\alpha_{i+1}-\alpha_i}\right\|=\left\|\frac{(f(w_{i+1})-f(w_{i}))\sum_{i=1}^n
d(w_i,w_{i-1})}{d(w_i,w_{i+1})}\right\|\le Cd(u,v),
\]
where we use \eqref{E:Bilip2} and the definition of a
$C$-geodesic. In the further steps the boundedness of the integral
follows from condition \ref{I:NonAccum} of Theorem
\ref{T:IsoMNonRNP}.
\medskip

In the next step we apply condition \ref{I:ManyExt} of Theorem
\ref{T:IsoMNonRNP} to the marked $Cuv$-geodesic $w_0=u, w_1,
\dots, w_{n-1}, w_n=v$. We get two marked $Cuv$-geodesics
\[z_0=u, z_1,\dots, z_m=v\]
and
\[\widetilde z_0=u, \widetilde z_1,\dots, \widetilde z_m=v.
\]

Let $\{z_i\}_{i\in C}=\{\widetilde z_i\}_{i\in C}$ be the set of
their common points, which also form a marked $Cuv$-geodesic, we
denote it $\{y_i\}$. We consider the refinement of the partition
given by $\alpha_i$ corresponding to the extension $\{y_i\}$ of
the geodesic $\{w_i\}$. Let $\{\beta_i\}$ be the points of
division of the corresponding partition of $[0,1]$. Then we divide
the subintervals containing distinct points in the corresponding
proportion. The condition \eqref{E:EqualProp} implies that we get
the equal divisions for both geodesics. We denote the obtained
division points $\{\gamma_i\}_{i=0}^m$, $\gamma_0=0$,
$\gamma_m=1$.
\medskip

Now we define two next functions of the martingale:
\[M_1(t)=\frac{f(y_{i+1})-f(y_{i})}{\beta_{i+1}-\beta_i}\quad \hbox{for}\quad t\in\left[\beta_{i}, \beta_{i+1}\right].
\]
It is clear that $M_0$ is the conditional expectation of $M_1$
with respect to the $\sigma$-algebra generated by $\alpha$.
\medskip

In the next step we do the same for the points $z_i$ and
$\widetilde z_i$ and numbers $\{\gamma_i\}$. The only difference
is that now for each $i\in D$ we pick either $z_i$ or $\widetilde
z_i$. We definitely get a martingale, no matter whether we pick
$z_i$ or $\widetilde z_i$ in each of the steps. The point of the
choice is to make the (eventually constructed) martingale
divergent. To achieve this goal in we observe that for each $i\in
D$ at least one of the following inequalities holds:

\begin{equation}\label{E:zIso}\begin{split}\left\|\frac{f(z_i)-f(z_{i-1})}{\gamma_i-\gamma_{i-1}}-\frac{f(z_{i+1})-f(z_{i})}{\gamma_{i+1}-\gamma_i}\right\|
\ge\frac{\ell}2\, d(z_i,\widetilde
z_i)\left(\frac{1}{\gamma_i-\gamma_{i-1}}+\frac{1}{\gamma_{i+1}-\gamma_i}\right)\end{split}
\end{equation}
or
\begin{equation}\label{E:zTildeIso}\begin{split}\left\|\frac{f(\widetilde z_i)-f(z_{i-1})}{\gamma_i-\gamma_{i-1}}-\frac{f(z_{i+1})-f(\widetilde z_{i})}{\gamma_{i+1}-\gamma_i}\right\|
\ge\frac{\ell}2\, d(z_i,\widetilde
z_i)\left(\frac{1}{\gamma_i-\gamma_{i-1}}+\frac{1}{\gamma_{i+1}-\gamma_i}\right)\end{split}
\end{equation}

In fact, otherwise we get
\[
||f(z_i)-f(\widetilde
z_i)||\left(\frac{1}{\gamma_i-\gamma_{i-1}}+\frac{1}{\gamma_{i+1}-\gamma_i}\right)<
{\ell}d(z_i,\widetilde
z_i)\left(\frac{1}{\gamma_i-\gamma_{i-1}}+\frac{1}{\gamma_{i+1}-\gamma_i}\right),
\]
we get a contradiction with \eqref{E:Bilip2}. This choice of the
$Cuv$-geodesic allows us to get a lower estimate for
$\int_0^1||M_2(t)-M_1(t)||dt$. We start by estimating the part of
this difference corresponding to the interval
$[\gamma_{i-1},\gamma_{i+1}]$. Since the restriction of $M_2$ to
this interval is a two-valued function, and $M_1$ is constant on
the interval, integral
\begin{equation}\label{E:IntOneInt2}\int_{\gamma_{i-1}}^{\gamma_{i+1}}||M_2-M_1||dt\end{equation}
can be estimated from below in the following way. Denote the value
of $M_2$ on the first part of the interval by $x$, the value on
the second by $y$, the value of $M_1$ on the whole interval by
$z$, the length of the first interval by $A_1$ and of the second
by $A_2$. We have: the desired integral is equal to
$A_1||x-z||+A_2||y-z||$ and therefore can be estimated in the
following way:
\[\begin{split}A_1||x-z||+A_2||y-z||&\ge\max\{||x-z||,||y-z||\}\cdot\min\{A_1,A_2\}\\&\ge\frac12||x-y||\min\{A_1,A_2\}.\end{split}\]

Therefore, assuming without loss of generality that the left-hand
side of \eqref{E:zIso} is larger than the left-hand side of
\eqref{E:zTildeIso}, the integral in \eqref{E:IntOneInt2} can be
estimated from below by
\[\begin{split}\frac12\cdot\frac{\ell}2\, d(z_i,\widetilde
z_i)\left(\frac{1}{\gamma_i-\gamma_{i-1}}+\frac{1}{\gamma_{i+1}-\gamma_i}\right)
\min\left\{\gamma_i-\gamma_{i-1},\gamma_{i+1}-\gamma_i\right\}
\ge\frac{\ell}4\,{d(z_i,\widetilde z_i)}.\end{split}
\]
Summing over all intervals and using the condition
\eqref{E:DistGeod}, we get
\begin{equation}\label{E:Diff12}||M_2-M_1||\ge \frac{\ell}4\,
c\,d(u,v).\end{equation}

We continue in an obvious way. It is clear that similarly to
\eqref{E:Diff12} we get \[||M_{2n}-M_{2n-1}||\ge \frac{\ell}4\,
c\,d(u,v)\] for each $n\in\mathbb{N}$.
\end{proof}

\subsection{Metric spaces satisfying the conditions of Theorem
\ref{T:MetrNonRNP} or \ref{T:IsoMNonRNP} are not test spaces for
the RNP}

\begin{theorem}\label{T:NotTestSp}
For each metric space $X$ satisfying the conditions of Theorem
\ref{T:MetrNonRNP} or \ref{T:IsoMNonRNP} there exists a subspace
of $L_1(0,1)$ which does not have the \RNP\ and does not admit a
bilipschitz embedding of $X$. Hence none of such metric spaces is
a test space for the \RNP.
\end{theorem}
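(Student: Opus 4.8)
The plan is to produce a single non-RNP subspace of $L_1(0,1)$ that, for \emph{every} metric space $X$ satisfying the hypotheses of Theorem~\ref{T:MetrNonRNP} or~\ref{T:IsoMNonRNP}, cannot receive a bilipschitz copy of $X$. The natural candidate is a space built from the Haar system or, equivalently, the closed span in $L_1(0,1)$ of a dyadic-martingale-difference sequence --- a classical example of a non-RNP space (it contains a bounded divergent martingale, hence fails RNP by Chatterji's criterion recalled in Section~\ref{S:MetrRNP}). The point is that such a space, while lacking the RNP, is nonetheless ``thin'' from the bilipschitz point of view: its metric structure is too rigid to absorb the highly branching geodesic structure that conditions~{\bf (1)}--{\bf (2)} (resp. conditions~\ref{I:ManyExt}--\ref{I:NonAccum}) force on $X$.

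First I would isolate a common metric feature shared by all spaces in the class. The key structural consequence of conditions~{\bf (1)}--{\bf (2)} is that between $u$ and $v$ (after rescaling so $d(u,v)=1$) there is, at every scale, a ``fork'': two geodesics agreeing at a common point $w_{i-1}$, then passing through distinct points $z_i\ne\widetilde z_i$ with $d(w_i,z_i)=d(w_i,\widetilde z_i)$ and $d(w_{i-1},z_i)=d(w_{i-1},\widetilde z_i)$, recombining at $w_i$, with the widths $\sum_i d(z_i,\widetilde z_i)$ bounded below by $c$. In other words, $X$ contains bilipschitz copies of diamond-like graphs $D_n$ with uniformly bounded distortion (indeed this is essentially why Theorem~\ref{T:MetrNonRNP} produces a divergent martingale). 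So it suffices to exhibit a non-RNP subspace $Z\subseteq L_1(0,1)$ into which the finite diamonds $D_n$ do \emph{not} embed with uniformly bounded distortion. The relevant fact is that $D_n$ embedding with uniformly bounded distortion into a space is closely tied to the space containing copies of $\ell_\infty^n$ uniformly (via the Johnson--Schechtman analysis of diamonds cited in Section~\ref{S:DualNonRNP}), whereas subspaces of $L_1$ spanned by martingale differences have finite cotype and thus cannot contain $\ell_\infty^n$'s uniformly.

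Concretely, I would take $Z$ to be the closed linear span in $L_1(0,1)$ of the Haar functions $\{h_{n,i}\}$ (suitably normalized). This $Z$ is isomorphic to a well-studied non-RNP space; it supports the bounded divergent Haar martingale, so $Z\notin\RNP$. On the other hand $Z$, being a subspace of $L_1$, has cotype~$2$, and has nontrivial type (one may instead pass to a subspace spanned by a subsequence of martingale differences with nontrivial type, or invoke that $L_1$-subspaces generated by independent-type systems have type $p$ for some $p>1$). A space of nontrivial type cannot contain $\ell_\infty^n$'s uniformly, and by the results on diamonds (Theorem~\ref{T:BulgDiamond} together with the finite-level estimates of~\cite{JS09}) a uniformly bilipschitz family $\{D_n\}$ in a Banach space forces the uniform presence of $\ell_\infty^n$'s. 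Hence $Z$ admits no uniformly bilipschitz family $\{D_n\}$, and therefore --- since any $X$ as in the hypotheses contains such a family isometrically (up to the fixed distortion coming from $c$ and the bilipschitz constants in the definition) --- $X$ does not embed bilipschitzly into $Z$. The ``isomorphic'' version (Theorem~\ref{T:IsoMNonRNP}) is handled identically: conditions~\ref{I:ManyExt}--\ref{I:NonAccum} still yield uniformly bilipschitz diamonds, the extra distortion $B$ and $C$ merely inflating the embedding constant.

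The main obstacle I anticipate is making precise the implication ``uniformly bilipschitz diamonds $\Rightarrow$ uniform $\ell_\infty^n$'s,'' and dually ``nontrivial type $\Rightarrow$ no uniform diamonds,'' in a self-contained way; the literature (\cite{JS09}, \cite{Ost11}, \cite{Ost13b}) contains the pieces but one must check the constants propagate correctly through the forks produced by condition~{\bf (2)}. A cleaner route, which I would pursue if the type argument gets delicate, is direct: given a hypothetical bilipschitz $f:X\to Z$, run the martingale construction of the proof of Theorem~\ref{T:MetrNonRNP} \emph{inside} $Z$ but now track the $L_1$-norms of the martingale differences --- the two-valued refinement steps force the differences to have $L_1(Z)$-norm bounded below, while a Pe\l czy\'nski-type / Gantmacher-type weak-compactness or the finite cotype of $Z$ bounds the number of such large-difference steps, a contradiction once $n$ is large. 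Either way the conclusion ``none of these spaces is a test space for the RNP'' follows, since a test space would have to embed into \emph{every} non-RNP Banach space, in particular into $Z$.
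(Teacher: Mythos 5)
Your proposal has a fatal gap at its central step: the claimed implication ``a uniformly bilipschitz family $\{D_n\}$ in a Banach space forces the uniform presence of $\ell_\infty^n$'s'' is false. By the Johnson--Schechtman theorem \cite{JS09} the finite diamonds embed with uniformly bounded distortion into a Banach space if and only if that space is \emph{not superreflexive}; since $L_1(0,1)$ is not superreflexive (and, being series-parallel graphs, the $D_n$ in fact embed into $L_1$ with a universal distortion bound by \cite{GNRS04}), $L_1$ itself --- which has cotype $2$ and therefore contains no $\ell_\infty^n$'s uniformly --- absorbs all the diamonds. Consequently no type/cotype obstruction of the kind you invoke can separate a non-RNP subspace of $L_1$ from the diamond-like structures in $X$. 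Your concrete candidate also collapses: the closed span of the Haar system in $L_1(0,1)$ is all of $L_1(0,1)$, which has only trivial type, and passing to a subspace with nontrivial type would require a separate argument that it still fails the \RNP. The fallback sketch (bounding the number of large martingale differences by finite cotype or weak compactness) fails for the same reason: $L_1$ carries bounded martingales all of whose increments are bounded below in $L_1$-norm --- that is precisely why it fails the \RNP\ --- so nothing soft can cap the number of large increments.

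The paper's proof goes a different and essentially unavoidable route: the non-RNP subspace of $L_1$ is allowed to, and does, depend on $X$. One observes that the divergent martingale produced in Theorems \ref{T:MetrNonRNP} and \ref{T:IsoMNonRNP} is adapted to a filtration whose $n$-th $\sigma$-algebra, although it depends on the target space and the embedding, is always one of finitely many possibilities determined by $X$ alone (at each stage there are only finitely many ways to choose the points $z_i'$); hence its cardinality is bounded by a sequence $k_n=k_n(X)$. The Bourgain--Rosenthal construction \cite{BR80}, in the strengthened form stated as Theorem \ref{T:BourgainRos}, then yields a non-RNP subspace $E\subset L_1(0,1)$ in which every bounded martingale adapted to $\sigma$-algebras of size at most $k_n$ satisfies $\liminf_n\|f_{n+1}-f_n\|_1=0$; a bilipschitz copy of $X$ in $E$ would produce a bounded martingale violating this, after passing to the odd-indexed subsequence to dispose of the steps for which no lower bound on the increment is available. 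If you wish to salvage a universal target space, you would need an invariant strictly finer than non-superreflexivity that is forced by the geodesic-fork conditions and yet fails for some non-RNP subspace of $L_1$; type and cotype are not such invariants, and the paper does not claim such a universal space exists.
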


Proof of this theorem is based on the following result of Bourgain
and Rosenthal \cite{BR80} (see also expositions of this result in
\cite[Section 4 of Chapter 5]{BL00} and \cite{Bou83}).

\begin{theorem}[Bourgain-Rosenthal]\label{T:BourgainRos} Let $\{k_n\}_{n=1}^\infty$ be any increasing
sequence of positive integers. There exists a subspace $E$ of
$L_1(0,1)$ which does not have the Radon-Nikod\'ym property, but
is such that each $E$-valued martingale $\{f_n\}_{n=1}^\infty$ on
$[0,1]$ adapted to a sequence  $\{\mathcal{B}_n\}_{n=1}^\infty$ of
finite $\sigma$-algebras satisfying $|\mathcal{B}_n|\le k_n$ and
bounded in the sense that $\sup_n||f_n||_\infty\le 1$ satisfies
$\liminf_{n\to\infty}||f_{n+1}-f_n||_1=0$.
\end{theorem}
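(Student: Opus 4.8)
The plan is to construct the subspace $E$ explicitly as the closed linear span of a carefully designed \emph{bush} (a uniformly separated tree-martingale) in $L_1(0,1)$, with branching numbers growing very fast relative to the prescribed sequence $\{k_n\}$, and then to verify the two required properties separately. The failure of the \RNP\ will follow immediately from the bush, while the martingale-smallness assertion $\liminf_n\|f_{n+1}-f_n\|_1=0$ will carry essentially all the weight of the argument and will be reformulated as a structural ``degeneracy'' statement about low-complexity bushes inside $E$.

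First I would fix rapidly increasing integers $(N_n)$, to be specified in terms of $k_n$ and auxiliary parameters $\varepsilon_n\downarrow 0$, and build inductively a tree of measurable subsets of $(0,1)$: starting from $(0,1)$ itself, each current set is partitioned into $N_n$ pieces, and at level $n$ one introduces normalized Haar-type increments $h_\sigma$ (scaled differences of indicators of children against their parent, with $\|h_\sigma\|_1$ of order one) supported on disjoint, rapidly shrinking sets. Summing these along a branch produces a bounded $\delta$-separated $E$-valued martingale which does not converge, so by the Chatterji characterization recalled above ($E$ has the \RNP\ iff every bounded $E$-valued martingale converges) \cite{Cha68}, the space $E:=\overline{\mathrm{span}}\{h_\sigma\}$ fails the \RNP. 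The design principle is to record at each level enough ``room'', namely $N_n\gg k_n$, that no filtration with only $k_n$ atoms can resolve the level-$n$ branching.

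The heart of the argument is then a degeneracy lemma for constrained martingales, which I would establish by contradiction together with a counting/averaging estimate. Given an $E$-valued martingale $\{f_n\}$ adapted to $\{\mathcal{B}_n\}$ with $|\mathcal{B}_n|\le k_n$ and $\sup_n\|f_n\|_\infty\le 1$, the values $\{f_n(s)\}$ taken on the domain atoms form a bounded bush inside the unit ball of $E$ whose level-$n$ branching is at most $k_n$, and one has $\|f_{n+1}-f_n\|_1=\int_0^1\|f_{n+1}(s)-f_n(s)\|_E\,ds$. If this stayed $\ge\eta>0$ for all large $n$, I would extract, on a set of positive measure in the domain, a genuine $\eta'$-separated bush in $E$ with branching $\le k_n$ per level. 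Expanding its vectors in the Haar-type system and using that the increments $h_\sigma$ at a fixed level are supported on disjoint, fast-decaying sets (hence are spread out and nearly annihilated by conditioning onto any $\sigma$-algebra with few atoms), one shows that $k_n$ unit vectors cannot remain $\eta'$-separated across a level of branching $N_n\gg k_n$: each of the $k_n$ available pieces can capture only a vanishing fraction of the oscillation budget carried by the $N_n$ fine directions. Since $N_n/k_n\to\infty$, uniform integrability in $L_1$ forces the separation to collapse, contradicting $\eta'>0$ and yielding $\liminf_n\|f_{n+1}-f_n\|_1=0$.

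I expect the main obstacle to be the passage from the canonical bush to \emph{all} constrained martingales at once. A hypothetical bad martingale need not be adapted to the bush filtration, so one must first align its own filtration $\{\mathcal{B}_n\}$ with the bush levels through a stopping-time/refinement argument before the counting estimate can be applied, and this alignment must be done without inflating the atom count beyond control. The genuine tension is that the bush must stay boundedly $\delta$-separated in order to kill the \RNP, yet its increments must simultaneously be spread finely enough to be washed out by any $k_n$-atom conditioning; this is precisely what dictates a choice of the form $N_n=N(\varepsilon_n,k_n)$, and verifying that such a choice succeeds — via a stabilization/diagonal argument passing to a subsequence of levels along which the constrained increments are forced below $\varepsilon_n$ — is where the real difficulty lies.
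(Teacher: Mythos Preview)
Your plan is the Bourgain--Rosenthal plan, and that is precisely what the paper invokes: it does not give a self-contained argument but points to the exposition following \cite[Example~5.30]{BL00} and records three local modifications --- replace $2^{-N(m,\ep)}$ by $1/k_{\lceil N(m,\ep)\rceil}$ in the definition of $\delta(F,\ep)$, restate the standing hypothesis on the filtration so that $|\mathcal{B}_n|\le k_n$, and in the key estimate replace the dyadic factor by $k_s$ in the convex-combination inequality. So at the architectural level (build a rapidly branching tree in $L_1$, let $E$ be its closed span, and show that filtrations with at most $k_n$ atoms cannot carry uniformly large $E$-valued increments) you and the paper coincide.

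Where your sketch drifts from the actual mechanism is in the degeneracy step. You frame it as a counting claim --- ``$k_n$ unit vectors cannot remain $\eta'$-separated across a level of branching $N_n\gg k_n$'' --- and then propose a stopping-time alignment of the two filtrations to justify it. Neither is how the Bourgain--Rosenthal argument runs, and the counting heuristic as stated is not obviously true: the number of separated directions available in $E$ at a given level is not the obstruction (indeed, any $k_n$ branch-tips of the canonical tree are well separated). The real mechanism is a measure-concentration estimate in the Ky Fan metric on $L_1$. One attaches to each finite-dimensional $F\subset E$ moduli $\delta(F,\ep)$ and $\gamma(F,\ep)$ controlling how much mass a unit vector of $F$ can place above a given height, and chooses the branching parameters so that $k_s\cdot\delta(F_n,\ep_n)\le\gamma(F_n,\ep_n)$. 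Given the constrained martingale, one approximates $f_s$ by a map $g_s$ into a fixed $F_n$; on each $\mathcal{B}_m$-atom the error $g_m(t)-f_m(t)$ is a convex combination of at most $k_s$ values of $g_s-f_s$, and the elementary inequality
\[
\mu\{|h|\ge\delta\}\le k\max_i\mu\{|h_i|\ge\delta\}
\]
for a convex combination $h$ of $k$ functions then forces the Ky Fan distance $d(g_m(t),f_m(t))$ to be small, from which the smallness of some $L_1$ increment is extracted. This is exactly the step the paper singles out in its third modification, and it is where the bound $|\mathcal{B}_n|\le k_n$ actually bites; it is not reducible to a pigeonhole on directions. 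Before filling in this step you should read \cite{BR80} or \cite[pp.~119--120]{BL00} directly.
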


This theorem is almost mentioned in \cite[top of page~55]{BR80}
and \cite[Remark on page 121]{BL00}. However the statements are
somewhat different. For this reason we describe the modifications
which should be done in the proof of the main result of
\cite{BR80} in order to get it in the form of Theorem
\ref{T:BourgainRos}. (I decided not to reproduce the whole proof
because this would lead to too much copying.)\medskip

We describe the modifications needed to get a proof of Theorem
\ref{T:BourgainRos} out of the proof following the Example 5.30 in
\cite{BL00}.

\begin{enumerate}

\item We replace $2^{-N(m,\ep)}$ in the definition of
$\delta(F,\ep)$ by $1/k_{\lceil N(m,\ep)\rceil}$, where $k_{\lceil
N(m,\ep)\rceil}$ is the corresponding term of the sequence
$\{k_n\}_{n=1}^\infty$.

\item We replace the assumption at the bottom of page 119 by the
assumption that there is a $E$-valued martingale $\{f_n\}$ on
$[0,1]$ adapted to some sequence $\{\mathcal{B}_n\}_{n=1}^\infty$
of finite $\sigma$-algebras satisfying $|\mathcal{B}_n|\le k_n$
and bounded in the sense that $\sup_n||f_n||_\infty\le 1$

\item In the second paragraph on page 120 we replace the
corresponding sentences by: The value of $g_m(t)-f_m(t)$ is a
convex combination of at most $k_s$ values of $g_s-f_s$ on the
$\mathcal{B}_m$-atom containing $t$. Using the estimate
$\mu\{|h|\ge\delta\}\le k\max\mu\{|h_i|\ge\delta\}$ whenever $h$
is a \emph{convex combination} of the $k$ functions $\{h_i\}$, we
see that for every $0\le t\le 1$
\[d(g_m(t),f_m(t))\le k_s\delta(F_n,\ep_n)\le\gamma(F_n,\ep_n).\]

\end{enumerate}

The rest of the proof is the same.

\begin{proof}[Proof of Theorem \ref{T:NotTestSp}] Theorem
\ref{T:BourgainRos} is not immediately applicable to the
martingales which we construct in Theorems \ref{T:MetrNonRNP} and
\ref{T:IsoMNonRNP}. There are two problems with its applicability:

\begin{itemize}

\item[{\bf (1)}] In our construction we can claim the norms
$||M_{2n}-M_{2n-1}||$ are bounded away from $0$, but there are no
lower bounds on $||M_{2n+1}-M_{2n}||$.

\item[{\bf (2)}] Another problem is that the sequence of finite
$sigma$-algebras $\{\mathcal{B}_n\}_{n=0}^\infty$ which we get in
our construction depends not only on the metric space which we
consider, but also on the embedding and the Banach space into
which we embed.

\end{itemize}

We can easily find a way around obstacle {\bf (1)}. We just
consider the martingale $\{M_{2n-1}\}_{n=1}^\infty$. Since the
conditional expectation is a contraction on $L_1(Y)$ even in the
Banach-space-valued case (see \cite[Section 1.1]{Pis11}), applying
the conditional expectation with respect to $\mathcal{B}_{2n}$ to
$M_{2n+1}-M_{2n-1}$ we get
\[||M_{2n+1}-M_{2n-1}||_1\ge||M_{2n}-M_{2n-1}||_1.\]
Therefore the norms of all differences of the martingale
$\{M_{2n-1}\}_{n=1}^\infty$ are bounded away from $0$ in $L_1(Y)$.
\medskip

Now we analyze obstacle {\bf (2)}. Let $X$ be any metric space
satisfying the conditions of Theorem \ref{T:MetrNonRNP} or
\ref{T:IsoMNonRNP}. By the corresponding proof this implies that
for any bilipschitz map $f:X\to Y$ into a Banach space $Y$ we can
construct a bounded $Y$-valued martingale $\{M_n\}$ such that
$\inf_n||M_{2n}-M_{2n-1}||\ge\delta>0$. By the previous remark we
can create out of it a new martingale $S_k$ with
$\inf_k||S_{k}-S_{k-1}||\ge\delta>0$. Now about the corresponding
$\sigma$-algebras. The $\sigma$-algebras corresponding to $M_0$,
$M_1$ and $M_2$ depend only of the metric space $X$ and the choice
of points which we make in the first step. The $\sigma$-algebras
$\mathcal{B}_3$ and $\mathcal{B}_4$ depend also on the choices of
$z_i$ and $\widetilde z_i$ which we make in the first step of the
construction. Therefore there are many different choices for the
algebras $\mathcal{B}_3$ and $\mathcal{B}_4$, the choice that we
have to make depends on the space $Y$ and the embedding $f$.
Important point is that there are finitely many options (because
there are finitely many ways to choose different collections
$\{z_i'\}$). Thus we have shown the following claim.
\medskip

\begin{claim} For each $n\in\mathbb{N}$ there exist finitely many finite $\sigma$-algebras
$\{\mathcal{B}_{n,j}\}_{j=1}^{m(n)}$ such that for each Banach
space $Y$ and each bilipschitz embedding of $M$ into $Y$ there is
a bounded divergent martingale $\{S_n\}_{n=1}^\infty$ with respect
to a filtration $\{\mathcal{B}_n\}_{n=1}^\infty$, where each
$\mathcal{B}_n$ is one of $\{\mathcal{B}_{n,j}\}_{j=1}^{m(n)}$,
and such that $\sup_n||S_n||_\infty<\infty$ and
$\inf_n||S_{n+1}-S_{n}||_1\ge\delta>0$.
\end{claim}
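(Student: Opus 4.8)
The plan is to take the Claim --- whose justification is essentially the discussion of obstacles {\bf (1)} and {\bf (2)} above --- pin down the finiteness assertion hidden in it, and then combine it with the Bourgain--Rosenthal Theorem~\ref{T:BourgainRos} to finish the proof of Theorem~\ref{T:NotTestSp}.

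For the Claim, the one point I would argue carefully is that the family $\{\mathcal{B}_{n,j}\}_{j=1}^{m(n)}$ can be taken finite and depending on $X$ alone. I would walk through the inductive construction of the divergent martingale $\{M_n\}$ in the proof of Theorem~\ref{T:MetrNonRNP} (and verbatim in Theorem~\ref{T:IsoMNonRNP}) and note that at every stage the newly introduced points $w_i,z_i,\widetilde z_i$, hence the partition of $[0,1]$ underlying $M_n$, are produced by applying condition {\bf (2)} (resp.\ condition~\ref{I:ManyExt}) to finitely many pairs of previously constructed consecutive points, supplemented by~\ref{I:SameLevel} (resp.\ by~\eqref{E:EqualProp}); none of this refers to $f$ or to $Y$. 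The embedding enters only through the binary choice, made at each stage for each relevant pair, of inserting $z_i$ versus $\widetilde z_i$, and only finitely many such choices have been made by stage $n$; hence the $\sigma$-algebra $\mathcal{B}_n$ attached to $M_n$ ranges over a finite set $\{\mathcal{B}_{n,j}\}_{j=1}^{m(n)}$ determined by $X$. Combined with the two reductions already in hand --- passing to $\{S_n\}:=\{M_{2n-1}\}$, for which $||S_{n+1}-S_n||_1\ge||M_{2n}-M_{2n-1}||_1\ge\delta>0$ by contractivity of conditional expectation on $L_1(Y)$, and rescaling $f$ so that $\sup_n||S_n||_\infty\le1$ --- this gives the Claim.

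To deduce Theorem~\ref{T:NotTestSp} I would put $k_n:=\max_{1\le j\le m(n)}|\mathcal{B}_{n,j}|$, enlarge it if necessary so that $\{k_n\}_{n\ge1}$ is strictly increasing, and apply Theorem~\ref{T:BourgainRos} to this sequence to obtain a subspace $E\subset L_1(0,1)$ without the \RNP\ in which every bounded $E$-valued martingale adapted to a filtration of $\sigma$-algebras of sizes $\le k_n$ satisfies $\liminf_n||f_{n+1}-f_n||_1=0$. If $X$ admitted a bilipschitz embedding into $E$, the Claim would produce a bounded $E$-valued martingale $\{S_n\}$ adapted to such a filtration with $\inf_n||S_{n+1}-S_n||_1\ge\delta>0$, contradicting this property of $E$. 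So $X$ does not admit a bilipschitz embedding into $E$, while $E\notin\RNP$; therefore $X$ is not a test space for the \RNP, which is the assertion of Theorem~\ref{T:NotTestSp}.

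The step I expect to be delicate is exactly the finiteness above: it must hold \emph{uniformly} over all target spaces $Y$ and all bilipschitz embeddings $f$, so one has to check that~\ref{I:SameLevel}, \eqref{E:EqualProp} and the non-accumulation-of-distortion condition~\ref{I:NonAccum} genuinely force the refinement points of the partitions to be insensitive to which representatives $z_i'$ are chosen at a given stage; otherwise the number of possible partitions at level $n$ could grow with the embedding instead of being controlled by $X$. Everything past that is bookkeeping together with the invocation of Theorem~\ref{T:BourgainRos}.
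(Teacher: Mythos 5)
Your proposal matches the paper's own argument: obstacle (1) is handled by passing to $\{M_{2n-1}\}$ via contractivity of conditional expectation, and obstacle (2) by observing that the embedding enters only through finitely many binary choices of $z_i$ versus $\widetilde z_i$, so that the possible $\sigma$-algebras at each level form a finite set determined by the metric space alone, after which Theorem~\ref{T:BourgainRos} is applied with $k_n=\max_j|\mathcal{B}_{n,j}|$. The delicate point you flag --- that conditions \ref{I:SameLevel} and \eqref{E:EqualProp} make the partition insensitive to the choice of representatives --- is exactly the observation the paper relies on, so the proposal is correct and essentially identical in approach.
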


It remains to let
\[k_n=\max_{j\in\{1,\dots,m(n)\}}|\mathcal{B}_{n,j}|\] and to apply
Theorem \ref{T:BourgainRos}.
\end{proof}

\section{A submetric test space for reflexivity}\label{S:SubMetrRefl}

The purpose of this section is to show that the well-known
(linear) characterization of reflexivity leads to a submetric test
space characterization of reflexivity.\medskip

Let $\Delta \ge 1$. The submetric space $X_\Delta$ is the space
$\ell_1$ with its usual metric. The only thing which makes it
different from $\ell_1$ is the set of active pairs $S_\Delta$: A
pair $(x,y)\in X_\Delta\times X_\Delta$ is active if and only if
\begin{equation}\label{E:DefXDelta} ||x-y||_1\le\Delta||x-y||_s,\end{equation} where $||\cdot||_s$
is the summing norm, that is,

\[||\{a_i\}_{i=1}^\infty||_s=\sup_k\left|\sum_{i=1}^k
a_i\right|.\]

\begin{theorem}\label{T:SubMetRef} $X_\Delta$, $\Delta\ge 2$ is a submetric test space
for reflexivity.
\end{theorem}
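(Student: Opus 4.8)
The plan is to prove both implications of the submetric characterization using the classical linear characterization of reflexivity via James-type finite tree / summing-basis structures. Recall that a Banach space $X$ is non-reflexive if and only if for some $\theta\in(0,1)$ (equivalently, as one checks, for every $\theta<1$) there are, for every $N$, vectors $x_1,\dots,x_N\in B_X$ and functionals $f_1,\dots,f_N\in B_{X^*}$ with $f_i(x_j)=\theta$ for $i\le j$ and $f_i(x_j)=0$ for $i>j$; equivalently $X$ contains, for every $N$, a $(1+\ep)$-isomorphic copy of $\ell_1^N$ whose basis is simultaneously $(1+\ep)$-equivalent in a coordinated way to the summing basis of the same length. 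This is exactly the data needed to talk about the summing norm $\|\cdot\|_s$ appearing in the definition of $X_\Delta$.

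First I would prove that non-reflexivity of $Y$ implies the existence of a partially bilipschitz embedding of $X_\Delta$ into $Y$ for every $\Delta\ge 2$. Fix the James vectors $\{x_i\}$ and functionals $\{f_i\}$ as above with $\theta$ close to $1$. The natural map sends a finitely supported $a=\{a_i\}\in\ell_1$ to $f(a)=\sum_i a_i x_i\in Y$ (on an $N$-dimensional slice, then extend by density/compatibility). The upper bound $\|f(a)-f(b)\|\le \|a-b\|_1$ is immediate since $\|x_i\|\le 1$. For the lower bound one only needs it on active pairs: if $(a,b)\in S_\Delta$, i.e. $\|a-b\|_1\le\Delta\|a-b\|_s$, then applying the functionals $f_k$ to $f(a-b)=\sum_i (a_i-b_i)x_i$ reconstructs the partial sums $\sum_{i\le k}(a_i-b_i)$ up to the factor $\theta$ and a controlled error, so $\|f(a)-f(b)\|\ge \theta'\|a-b\|_s\ge (\theta'/\Delta)\|a-b\|_1$; thus $f$ is partially bilipschitz with distortion roughly $\Delta/\theta'$, which is finite. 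Here I would be careful that the James vectors form a coordinated sequence so that the functional evaluation really produces the summing-norm partial sums, not just some weaker estimate; the standard James construction (as in the proof of the classical James characterization of reflexivity) supplies exactly this.

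For the converse, suppose $X_\Delta$ admits a partially bilipschitz embedding $f$ into $Y$ with distortion $C$, i.e. $\|a-b\|_1\le\|f(a)-f(b)\|\le C\|a-b\|_1$ whenever $(a,b)\in S_\Delta$, and suppose toward a contradiction that $Y$ is reflexive. The idea is to feed long segments of $X_\Delta$ that are entirely made of active pairs — for instance the images of $0, e_1, e_1+e_2,\dots, e_1+\dots+e_N$, along which any two points $x,y$ satisfy $\|x-y\|_1 = \|x-y\|_s$ (the difference has all coordinates equal on an initial block), hence lie in $S_\Delta$ for any $\Delta\ge 1$ — and observe that their images form, after the obvious normalization, a long James-type configuration in $Y$: the "summing" behaviour of the segment plus the bilipschitz estimates on active pairs force the existence, for every $N$, of the James matrix data inside $Y$ with parameters bounded away from the reflexive threshold. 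More precisely, the partial-sum vectors $s_k=f(e_1+\dots+e_k)-f(0)$ satisfy $\|s_k-s_j\|$ comparable to $|k-j|$; separating Hahn–Banach functionals give the upper-triangular evaluation pattern, and one must also verify the lower-triangular vanishing pattern, which is where one uses more active pairs (segments that first go up along some coordinates and then along disjoint coordinates, all still having $\|\cdot\|_1=\|\cdot\|_s$ between the relevant points). This contradicts James's theorem, so $Y$ is non-reflexive.

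The main obstacle I expect is the reverse direction, specifically extracting the \emph{full} James matrix (both the triangular pattern of values and the vanishing pattern) from a merely partially bilipschitz image of $X_\Delta$: one has good metric control only on active pairs, and one must choose the probe points in $\ell_1$ so cleverly that (i) all the pairs used remain active — which forces $\Delta\ge 2$, since one needs a little room beyond the pure-summing segments to also have access to the "disjoint block" pairs that witness the vanishing relations — and (ii) the resulting vectors and separating functionals in $Y$ genuinely satisfy the James inequalities with constants independent of $N$. Getting the constant threshold exactly $\Delta\ge 2$ (rather than some larger absolute constant) will require an economical choice of these probe configurations together with the sharp form of James's non-reflexivity criterion.
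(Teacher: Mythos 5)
Your forward direction is essentially the paper's argument: map $e_i\mapsto y_i$ for a James-type sequence, get the upper bound from $\|y_i\|\le 1$, and recover $\|\cdot\|_s$ (hence $\frac1\Delta\|\cdot\|_1$ on active pairs) by evaluating functionals on partial sums. The paper uses a single functional $f$ with $f(y_i)=\theta$ together with the basis constant of $\{y_i\}$ to control partial sums, rather than a triangular array $f_i(x_j)$, but this is a cosmetic difference.

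The backward direction, however, has a genuine gap, and it is precisely at the point you flag as the ``main obstacle.'' Your plan is to extract a full James matrix directly from the nonlinear map by probing with partial-sum points and invoking Hahn--Banach functionals. The points $0,e_1,e_1+e_2,\dots$ only give a sequence $s_k$ with $\|s_k-s_j\|$ comparable to $|k-j|$, which exists in every infinite-dimensional Banach space and carries no reflexivity information; and the lower-triangular vanishing pattern $f_i(x_j)=0$ requires exact functional identities that a merely bilipschitz (even on all pairs) image does not supply. No mechanism is given for producing functionals that act \emph{coherently} across all the probe vectors, and making such a purely metric argument work would come close to solving the open problem (stated in the paper) of whether a genuine metric test space for reflexivity exists. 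The whole point of the submetric formulation is that one can avoid this.

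The missing idea is linearization. First, every $z\in\ell_1$ splits as $z=x_1-x_2$ with $x_1,x_2\ge 0$ coordinatewise and $\|z\|_1=\|x_1\|_1+\|x_2\|_1$; since nonnegative vectors satisfy $\|x\|_1=\|x\|_s$, the pairs $(y+z,y+x_1)$ and $(y+x_1,y)$ are active, so the partially bilipschitz map $T$ is in fact globally Lipschitz on $\ell_1$. Next, if $Y$ fails the RNP it is already nonreflexive and there is nothing to prove; otherwise apply the Aronszajn--Christensen--Mankiewicz theorem to obtain a point of G\^ateaux differentiability, whose derivative $D$ is a bounded linear operator with $\|Dz\|\ge\|z\|_1$ whenever $\|z\|_1\le\Delta\|z\|_s$. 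Finally, test $D$ on vectors $\sum_{j\le i}\alpha_je_j-\sum_{j>i}\alpha_je_j$ with convex coefficient blocks: these have $\ell_1$-norm $2$ and summing norm at least $1$, so they are active exactly when $\Delta\ge 2$ (this is the true role of the threshold $\Delta\ge 2$, not the ``disjoint block'' heuristic you describe), and one concludes that the convex hulls of $\{De_1,\dots,De_i\}$ and $\{De_{i+1},\dots\}$ stay at distance $\ge 2$, which by the classical Pt\'ak--Singer--Pe\l czy\'nski--James--Milman criterion forces $Y$ to be nonreflexive.
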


We start by recalling the characterization of reflexivity
developed in a series of papers around 1960:
\cite{Pta59,Sin62,Pel62,Jam64b,MM65}. We state it in the following
way (we use the standard terminology of \cite{LT77}):

\begin{theorem}\label{T:NonRefl} For each $0<\theta<1$ there exists $1<B<\infty$ such that a Banach space $Y$ is
non-reflexive if and only if there is a basic sequence
$\{y_i\}_{i=1}^\infty\subset Y$ with basic constant $B$ and
$||y_i||=1$; and a functional $f\in Y^*$ such that $||f||=1$ and
$f(y_i)=\theta$ for all $i\in\mathbb{N}$.
\end{theorem}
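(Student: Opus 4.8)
The plan is to prove the two implications separately, treating the statement as a repackaging of the basic-sequence characterization of reflexivity from \cite{Pta59,Sin62,Pel62,Jam64b,MM65}. The genuinely easy direction is the ``only if'' for non-reflexivity, i.e.\ that the configuration forces $Y\notin\{\text{reflexive}\}$. Suppose we are given a basic sequence $\{y_i\}_{i=1}^\infty$ with $||y_i||=1$ and basic constant at most $B$, together with $f\in Y^*$, $||f||=1$, and $f(y_i)=\theta$ for all $i$. Let $Z=\overline{\operatorname{span}}\{y_i\}$, so that $\{y_i\}$ is a Schauder basis of $Z$ and $g:=f|_Z$ is a norm-one functional with $g(y_i)=\theta\neq 0$ for every $i$. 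Then for each $n$ the restriction of $g$ to the tail $\overline{\operatorname{span}}\{y_i:i>n\}$ has norm at least $|g(y_{n+1})|/||y_{n+1}||=\theta>0$, so the basis $\{y_i\}$ is \emph{not} shrinking. Since a basis of a reflexive space is shrinking (James), $Z$ is non-reflexive, and as a closed subspace of a reflexive space is reflexive, $Y$ must be non-reflexive. This half needs only $\theta\neq 0$ and makes no use of the bound $B$.

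For the converse I would invoke the classical James construction underlying the cited papers. Given a non-reflexive $Y$ and the prescribed $\theta\in(0,1)$, one builds inductively a normalized sequence $\{y_i\}\subset S_Y$ and functionals $\{f_m\}\subset S_{Y^*}$ with the triangular biorthogonality
\[ f_m(y_n)=\theta\ \text{ for } m\le n, \qquad f_m(y_n)=0\ \text{ for } m>n, \]
the non-reflexivity entering through the Helly/Goldstine-type failure of norm attainment that lets one choose, at each stage, a new unit vector realizing the prescribed values against all previously chosen functionals. The single functional demanded by the theorem is then obtained for free by a ``first-functional'' observation: since $1\le n$ for every $n$, the functional $f:=f_1$ satisfies $f(y_n)=\theta$ for all $n$ and $||f||=1$, which is exactly condition required. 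Thus the apparently stronger single-functional form is automatic from the triangular form.

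The main obstacle is to run the selection so that the resulting $\{y_i\}$ is not merely minimal but genuinely \emph{basic with basic constant bounded by a number $B=B(\theta)$ depending on $\theta$ alone}. Triangular biorthogonality with uniformly bounded $\{f_m\}$ only yields control of individual coefficients (via $f_m-f_{m+1}$), not of the partial-sum projections, so basicity must be engineered into the induction by interleaving the James step with a Mazur/Gram--Schmidt almost-biorthogonality step, choosing each $y_{n+1}$ so that the projection constant stays below a threshold fixed in advance in terms of $\theta$; this is precisely the content of the basic-sequence versions in \cite{Sin62,MM65}. Two compatibility points must be checked simultaneously during this induction: that the vectors can be kept on the unit sphere $S_Y$ while still realizing the \emph{exact} value $\theta$ (rather than merely some value $\ge\theta$), and that the basic-constant threshold and the value $\theta$ can be met together, which is the reason $B$ is allowed to depend on $\theta$ and forces $\theta<1$. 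Once this construction is in place, setting $f=f_1$ completes the ``if'' direction and, combined with the shrinking-basis argument above, yields the stated equivalence.
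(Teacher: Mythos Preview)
The paper does not prove Theorem~\ref{T:NonRefl}: it is introduced with the words ``We start by recalling the characterization of reflexivity developed in a series of papers around 1960: \cite{Pta59,Sin62,Pel62,Jam64b,MM65}'' and is then used as a black box in the proof of Theorem~\ref{T:SubMetRef}. So there is no in-paper proof to compare your proposal against.

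That said, your plan is the standard one and matches what those cited sources actually do. Your treatment of the easy direction (the configuration forces non-reflexivity via the basis failing to be shrinking) is complete and correct. For the converse you correctly identify the James triangular-array construction and the observation that $f:=f_1$ already gives the single functional with $f(y_n)=\theta$; you also correctly isolate the only nontrivial point, namely that the inductive choice must be interleaved with a Mazur-type selection to force a basic constant depending only on $\theta$, which is precisely the refinement carried out in \cite{Sin62,MM65}. As a proof outline this is sound; to turn it into a self-contained proof you would have to actually write out the interleaved induction (or cite one of those papers for the exact statement), since ``once this construction is in place'' is where all the work lies.
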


\begin{proof}[Proof of Theorem \ref{T:SubMetRef}] Suppose that $Y$ is nonreflexive and show that in such a case $\ell_1$ admits a \emph{partially} bilipschitz embedding
with the set of active pairs $S_\Delta$. Let $\{y_i\}\subset Y$
and $f\in Y^*$ satisfy conditions of Theorem \ref{T:NonRefl}. We
embed $X_\Delta$ into $Y$ in the following way: we map a unit
vector $e_i$ of $\ell_1$ to $y_i$, and extend this map by
linearity. It is clearly a Lipschitz map (on the whole space
$\ell_1$). \medskip

We need to estimate $||u-v||_X$ from below in terms of $||u-v||_1$
for an active pair $(u,v)$. We let $u=\sum_iu_iy_i$,
$v=\sum_iv_iy_i$. We have

\[\begin{split}
||u-v||_X&\ge\sup_k\frac{||\sum_{i=1}^k(u_i-v_i)y_i||}{B}\ge
\sup_k\frac{|f(\sum_{i=1}^k(u_i-v_i)y_i)|}{B}\\
&=\theta\sup_k\frac{|\sum_{i=1}^k(u_i-v_i)|}{B}=\frac{\theta}{B}||u-v||_s\ge\frac{\theta}{B\Delta}||u-v||_1.
\end{split}\]

Now let $T:\ell_1\to Y$ be a partially bilipschitz embedding with
the set of active pairs $S_\Delta$ and constant $C$ (see
\eqref{E:PartBilipsch}). Observe that each vector in $z\in\ell_1$
can be represented as a difference of two vectors $z=x_1-x_2$ with
positive coordinates, for which $||x_1||_1=||x_1||_s$,
$||x_2||_1=||x_2||_s$ and $||z||_1=||x_1||_1+||x_2||_1$. Therefore
a partially bilipschitz embedding of $X_\Delta$ into $Y$ is a
Lipschitz map of $\ell_1$ into $Y$:
\begin{equation}\label{E:LipEvr}\begin{split}||T(y+z)-T(y)||_Y&\le
||T(y+z)-T(y+x_1)||_Y+||T(y+x_1)-Ty||_Y\\&\le
C||x_2||_1+C||x_1||_1\\&= C||z||_1.\end{split}\end{equation}

If $Y$ does not have the RNP, then $Y$ is nonreflexive (as is well
known), see \cite{BL00}, and there is nothing to prove in this
case. If $Y$ has the RNP, then, by the theorem of Aronszajn
\cite{Aro76}, Christensen \cite{Chr73}, and Mankiewicz
\cite{Man73} (see \cite[Theorem 6.42]{BL00}), there is a point $p$
of G\^{a}teaux differentiability of $T$ in $\ell_1$. Let $D$ be
the G\^{a}teaux derivative of $T$ at $p$. Since $T$ is partially
bilipschitz, for each $z\in\ell_1$ satisfying
$||z||_1\le\Delta||z||_s$ we have \[||z||_1\le
||T(p+z)-T(p)||_Y\le C||z||_1.\] Using this inequality,
\eqref{E:LipEvr}, and the definition of the G\^{a}teaux derivative
we get that $D:\ell_1\to Y$ is a linear operator satisfying
$||D||\le C$ and
\begin{equation}\label{E:ConsPartBil} ||Dz||_Y\ge ||z||_1 \hbox{ for
each } z\in\ell_1 \hbox{ satisfying
}||z||_1\le\Delta||z||_s.\end{equation}

Let $u_i=De_i$, where $\{e_i\}$ is the unit vector basis of
$\ell_1$. Then the sequence $\{u_i\}$ satisfies the conditions:

\begin{enumerate}

\item $1\le ||u_i||\le C$.

\item\label{I:Conv} $\forall
i\in\mathbb{N}\quad\dist(\conv(u_1,\dots,u_i),\conv(u_{i+1},\dots))\ge
2$.

\end{enumerate}

In fact, the first condition follows from $||D||\le C$. The second
condition follows from the inequality
\[\left\|\sum_{j=1}^i\alpha_je_j-\sum_{j=i+1}^\infty\alpha_je_j\right\|_1\le
2\left\|\sum_{j=1}^i\alpha_je_j-\sum_{j=i+1}^\infty\alpha_je_j\right\|_s,
\]
satisfied whenever $\alpha_j$ are nonnegative and satisfy
$\sum_{j=1}^i\alpha_j=1$ and $\sum_{j=i+1}^\infty\alpha_j=1$.
Therefore (recall that $\Delta\ge 2$)
\[\begin{split}\left\|D\left(\sum_{j=1}^i\alpha_je_j-\sum_{j=i+1}^\infty\alpha_je_j\right)\right\|_Y
&=\left\|\sum_{j=1}^i\alpha_ju_j-\sum_{j=i+1}^\infty\alpha_ju_j\right\|_Y\\&\stackrel{\eqref{E:ConsPartBil}}{\ge}
\left\|\sum_{j=1}^i\alpha_je_j-\sum_{j=i+1}^\infty\alpha_je_j\right\|_1=2.\end{split}
\]

It is well known (and is a version of the characterization of
reflexivity developed in \cite{Pta59,Sin62,Pel62,Jam64b,MM65}; see
\cite[pp.~49--55]{Bea82} and \cite[Chapter 6]{Ost13b}) that the
existence of such sequence $\{u_j\}$ in $Y$ implies nonreflexivity
of $Y$.\end{proof}

\end{large}

\begin{small}

\renewcommand{\refname}{\section{References}}

\end{small}


\begin{thebibliography}{BNSWW12}

\bibitem[Aro76]{Aro76} N.~Aronszajn, Differentiability of Lipschitzian mappings
between Banach spaces, {\it Studia Math.}, {\bf 57} (1976), no. 2,
147--190.

\bibitem[Bal12]{Bal12} K.~Ball, The Ribe programme, {\it S\'eminaire Bourbaki}, (2012), expos\'e
1047, 12 pp.

\bibitem[Bau07]{Bau07} F.~Baudier, Metrical characterization of
super-reflexivity and linear type of Banach spaces, {\it Archiv
Math.},  {\bf  89}  (2007),  no. 5, 419--429.

\bibitem[Bea82]{Bea82} B.~Beauzamy, {\it Introduction to Banach
spaces and their geometry}, North Holland Publishing Co.,
Amsterdam, 1982.

\bibitem[BL00]{BL00} Y. Benyamini, J. Lindenstrauss, {\it Geometric Nonlinear Functional Analysis}, volume 1,
Providence, R.I., AMS, 2000.

\bibitem[Bou86]{Bou86} J.~Bourgain, The metrical interpretation
of superreflexivity in Banach spaces, {\it Israel J. Math.}, {\bf
56} (1986), no. 2, 222--230.

\bibitem[BMW86]{BMW86} J.~Bourgain, V.~Milman, H.~Wolfson, On type of
metric spaces, {\it Trans. Amer. Math. Soc.}, {\bf 294} (1986),
no. 1, 295--317.

\bibitem[BR80]{BR80} J.~Bourgain, H.\,P.~Rosenthal, Martingales valued in certain
subspaces of $L^{1}$, {\it Israel J. Math.}, {\bf 37} (1980), no.
1-2, 54--75.

\bibitem[Bou83]{Bou83} R.\,D.~Bourgin, {\it Geometric aspects of convex sets with the Radon-Nikod\'ym
property}, Lecture Notes in Mathematics, {\bf 993},
Springer-Verlag, Berlin, 1983.

\bibitem[Cha68]{Cha68} S.\,D.~Chatterji,
Martingale convergence and the Radon-Nikodym theorem in Banach
spaces, {\it Math. Scand.}, {\bf 22} (1968) 21--41.

\bibitem[Che99]{Che99} J.~Cheeger,
Differentiability of Lipschitz functions on metric measure spaces,
{\it Geom. Funct. Anal.}, {\bf 9} (1999), no. 3, 428--517.

\bibitem[CK06]{CK06} J.~Cheeger, B.~Kleiner,
On the differentiability of Lipschitz maps from metric measure
spaces to Banach spaces, in: {\it Inspired by S.\,S.~Chern},
129--152, Nankai Tracts Math., {\bf 11}, World Sci. Publ.,
Hackensack, NJ, 2006.

\bibitem[CK09]{CK09} J.~Cheeger, B.~Kleiner, Differentiability of Lipschitz
maps from metric measure spaces to Banach spaces with the
Radon-Nikod\'ym property, {\it  Geom. Funct. Anal.}, {\bf  19}
(2009), no. 4, 1017--1028; {\tt arXiv:0808.3249}.

\bibitem[CK13]{CK13} J. Cheeger, B. Kleiner, Realization of metric spaces as inverse limits, and
bilipschitz embeddings in $L_1$, {\it  Geom. Funct. Anal.}, DOI:
10.1007/s00039-012-0201-8; {\tt arXiv:1110.2406}

\bibitem[Chr73]{Chr73} J.\,P.\,R.~Christensen, Measure theoretic zero sets in
infinite dimensional spaces and applications to differentiability
of Lipschitz mappings. Actes du Deuxi\`eme Colloque d'Analyse
Fonctionnelle de Bordeaux (Univ. Bordeaux, 1973), I, pp. 29--39.
{\it Publ. D\'ep. Math.} (Lyon) {\bf 10} (1973), no. 2, 29--39.

\bibitem[CK63]{CK63} H.~Corson, V.~Klee,
Topological classification of convex sets, {\it Proc. Sympos. Pure
Math.}, Vol. {\bf VII} pp.~37--51, Amer. Math. Soc., Providence,
R.I., 1963.


\bibitem[DU77]{DU77} J.~Diestel, J.\,J.~Uhl, Jr., {\it Vector measures},
With a foreword by B. J. Pettis. Mathematical Surveys, No. 15.
American Mathematical Society, Providence, R.I., 1977.


\bibitem[GNRS04]{GNRS04} A.~Gupta, I.~Newman, Y.~Rabinovich,
A.~Sinclair, Cuts, trees and $\ell_1$-embeddings of graphs, {\it
Combinatorica}, {\bf 24} (2004) 233--269; Conference version in:
{\it 40th Annual IEEE Symposium on Foundations of Computer
Science}, 1999, pp.~399--408.

\bibitem[Jam64b]{Jam64b} R.\,C.~James, Weak compactness and reflexivity, {\it Israel J. Math.}, {\bf
2} (1964), 101--119.

\bibitem[JS09]{JS09} W.\,B.~Johnson, G. Schechtman, Diamond graphs and
super-reflexivity, {\it J. Topol. Anal.}, {\bf 1} (2009),
177--189.

\bibitem[Kei04]{Kei04} S.~Keith. A differentiable structure for
metric measure spaces,  {\it Adv. Math.} {\bf 183} (2004), no. 2,
271--315.

\bibitem[KM11]{KM11} B.~Kleiner, J.~Mackay,  Differentiable structures on metric measure spaces: a
primer, {\tt arXiv:1108.1324}.

\bibitem[Laa00]{Laa00} T.\,J.~Laakso, Ahlfors $Q$-regular
spaces with arbitrary $Q > 1$ admitting weak Poincare inequality,
{\it Geom. Funct. Anal.}, {\bf 10} (2000), no. 1, 111--123.

\bibitem[LP01]{LP01} U.~Lang, C.~Plaut, Bilipschitz embeddings of metric
spaces into space forms, {\it Geom. Dedicata}, {\bf 87}  (2001),
285--307.

\bibitem[LN06]{LN06} J.\,R.~Lee, A.~Naor, $L_p$ metrics on the Heisenberg group and the
Goemans-Linial conjecture, in: {\it Proceedings of the 47th Annual
IEEE Symposium on Foundations of Computer Science}, IEEE,
Piscataway, NJ, 2006, pp. 99--108.

\bibitem[LT77]{LT77} J.~Lindenstrauss, L.~Tzafriri, {\it Classical
Banach spaces}, Volume {\bf I}, Sequence spaces, Springer-Verlag,
Berlin, 1977.

\bibitem[Man73]{Man73} P.~Mankiewicz, On the differentiability of Lipschitz
mappings in Fr\'echet spaces, {\it Studia Math.}, {\bf 45} (1973),
15--29.

\bibitem[MN08]{MN08} M.~Mendel, A. Naor, Metric cotype, {\it Ann. Math.}, {\bf 168} (2008), 247--298.

\bibitem[MM65]{MM65} D.\,P.~Milman,  V.\,D.~Milman,  The geometry of nested families with
empty intersection. Structure of the unit sphere of a nonreflexive
space (Russian), {\it Matem. Sbornik}, {\bf 66} (1965), no.  1,
109--118; English transl.: {\it Amer.  Math.  Soc.  Transl.}  (2)
v.~{\bf 85} (1969), 233--243.

\bibitem[Nao12]{Nao12} A.~Naor,
An introduction to the Ribe program, {\it Jpn. J. Math.}, {\bf 7}
(2012), no.~2, 167--233.

\bibitem[Ost11]{Ost11} M.\,I.~Ostrovskii, On metric characterizations of some classes of Banach
spaces, {\it  C. R. Acad. Bulgare Sci.}, {\bf 64}, (2011), no. 6,
775--784.

\bibitem[Ost13a]{Ost13a} M.\,I.~Ostrovskii,  Test-space characterizations of some classes of Banach spaces, in:
{\it Algebraic Methods in Functional Analysis}, The Victor Shulman
Anniversary Volume, I.\,G.~Todorov, L.~Turowska (Eds.), Operator
Theory: Advances and Applications, Vol. {\bf 233}, Birkh\"auser,
Basel, 2013, to appear.

\bibitem[Ost13b]{Ost13b} M.\,I.~Ostrovskii, {\it Metric Embeddings: Bilipschitz and Coarse Embeddings into Banach
Spaces}, book in preparation.

\bibitem[Ost13+]{Ost13+} M.\,I.~Ostrovskii, Different forms of metric
characterizations of classes of Banach spaces, {\it Houston. J.
Math.}, to appear; {\tt arXiv:1112.0801}.

\bibitem[Pel62]{Pel62} A.~Pe\l czy\'nski, A note on the paper of I.~Singer ``Basic
sequences and reflexivity of Banach spaces'', {\it Studia Math.},
{\bf 21} (1961/1962), 371--374.

\bibitem[Pis86]{Pis86} G.~Pisier,
Probabilistic methods in the geometry of Banach spaces, in: {\it
Probability and analysis (Varenna, 1985)}, 167--241, {\it Lecture
Notes in Math.}, {\bf 1206}, Springer, Berlin, 1986.

\bibitem[Pis11]{Pis11} G.~Pisier, Martingales in Banach spaces
(in connection with type and cotype), Lecture notes of a course
given at l'Institut Henri Poincar\'e, February 2--8, 2011, 242 pp;
see the web site: {\tt
http://perso-math.univ-mlv.fr/users/banach/Winterschool2011/}

\bibitem[Pta59]{Pta59} V.~Pt\'ak, Biorthogonal systems and
reflexivity of Banach spaces, {\it Czechoslovak Math. J.}, {\bf 9}
(1959), 319--326.

\bibitem[Rib84]{Rib84} M.~Ribe, Existence of separable uniformly homeomorphic nonisomorphic Banach
spaces, {\it  Israel J. Math.}, {\bf 48} (1984), no. 2-3,
139--147.

\bibitem[Sin62]{Sin62} I.~Singer, Basic sequences and reflexivity of Banach spaces,
{\it Studia Math.}, {\bf 21} (1961/1962), 351--369.

\bibitem[Ste75]{Ste75} C.~Stegall,
The Radon-Nikod\'ym property in conjugate Banach spaces, {\it
Trans. Amer. Math. Soc.}, {\bf 206} (1975), 213--223.

\bibitem[Tex09]{Tex09} {\it Texas 2009 nonlinear problems}, open
problems suggested at the seminar ``Nonlinear geometry of Banach
spaces'', Texas A \&\ M University, August 2009. Available at:
{\tt
http://facpub.stjohns.edu/ostrovsm/Texas2009nonlinearproblems.pdf}

\end{thebibliography}
\end{document}